\documentclass[12pt]{amsart}
\usepackage[T1]{fontenc}
\usepackage[utf8]{inputenc}
\usepackage{lmodern}
\usepackage{amsmath,amssymb,mathtools}
\usepackage[hmargin=1.05in,vmargin=0.9in]{geometry}
\usepackage{microtype}
\usepackage{url}
\usepackage[hidelinks]{hyperref}
\hypersetup{
  pdftitle={Pseudodifferential structure, non-rigidity, and sharpness for the Steklov problem on differential forms},
  pdfsubject={Differential forms and boundary spectral geometry},
  pdfkeywords={Steklov eigenvalues, differential forms, Dirichlet-to-Neumann operator, conformal invariance, pseudodifferential operators},
  pdflang={en}
}
\numberwithin{equation}{section}

\newtheorem{theorem}{Theorem}[section]
\newtheorem{proposition}[theorem]{Proposition}
\newtheorem{lemma}[theorem]{Lemma}
\newtheorem{corollary}[theorem]{Corollary}
\theoremstyle{remark}
\newtheorem{remark}[theorem]{Remark}

\newcommand{\bt}{\mathbf{t}}
\newcommand{\bn}{\mathbf{n}}
\newcommand{\R}{\mathbb R}
\newcommand{\cC}{\mathrm c\mathcal C}
\newcommand{\HD}{\mathcal H_D}
\newcommand{\Id}{\operatorname{Id}}
\newcommand{\ii}{\mathrm i}
\newcommand{\Psicl}[1]{\Psi_{\mathrm{cl}}^{#1}}
\newcommand{\ip}[3]{\langle\!\langle #1,#2\rangle\!\rangle_{L^2(#3)}}
\newcommand{\doi}[1]{\href{https://doi.org/#1}{doi:\nolinkurl{#1}}}
\DeclareMathOperator{\im}{im}
\DeclareMathOperator{\Vol}{Vol}
\DeclareMathOperator{\Int}{int}

\title{Pseudodifferential structure, rigidity, and sharpness
for the Steklov problem on differential forms}
\author{Daguang Chen}
\email{dgchen@tsinghua.edu.cn}
\author{Guotao Huang}
\email{hgt26@mails.tsinghua.edu.cn}
\author{Tingyu Liao}
\email{liaoty25@mails.tsinghua.edu.cn}
\address{Department of Mathematical Sciences, Tsinghua University, Beijing 100084, P. R. China.}
\subjclass[2020]{Primary 58J50; Secondary 35P15, 58J40, 58A14}
\keywords{Steklov eigenvalues, differential forms, Dirichlet-to-Neumann operator,
Hersch--Payne--Schiffer inequality,  pseudodifferential operators}
\thanks{The authors were supported by  NSFC-FWO W2521103.}
\date{\today}

\begin{document}

\begin{abstract}
We resolve four questions concerning Karpukhin's Dirichlet-to-Neumann
operator on differential forms. First, for $0\leq p\leq n-2$, we prove
that the operator is classical pseudodifferential of order one and compute
its principal symbol, which is elliptic on the coclosed symbol space.
This answers Open Question~7.19 in the survey of Colbois, Girouard,
Gordon, and Sher~\cite{CGGS2024}. Second, the symbol calculation yields a Weyl law with
remainder $O(\tau^{n-2})$, resolving the smooth case of their Open
Question~7.23 in \cite{CGGS2024}. Third, in the critical dimension $n=2p+2$, we show that
conformal deformations of the Euclidean ball fixing the boundary metric
preserve the full operator. Nonisometric metrics therefore retain all
equalities in Karpukhin's initial equality range, disproving the
isometry-rigidity assertion of Conjecture~2.12 in \cite{Karpukhin2019}(the rigidity assertion of Conjecture~7.27 in \cite{CGGS2024}). Fourth, we establish the
all-index sharpness asserted in Conjecture~2.13 in \cite{Karpukhin2019}(the sharpness assertion of Conjecture~7.27 in \cite{CGGS2024}) in the class of smooth
compact connected oriented Riemannian manifolds with boundary. For
$p\geq1$, a shrinking-core construction inspired by the collapsing models
of Ann\'e and Takahashi produces manifolds with fixed spherical boundary
and explicit two-sided bounds for an initial eigenvalue cluster.

\end{abstract}

\maketitle
\markboth{DAGUANG CHEN, GUOTAO HUANG, AND TINGYU LIAO }{ FOUR QUESTIONS
FOR KARPUKHIN'S STEKLOV OPERATOR }

\section{Introduction}\label{sec:introduction}
Let $(M^n,g)$ be a smooth compact connected oriented Riemannian manifold,
$n\geq2$, with smooth nonempty boundary $\Sigma$, and let
$g_\Sigma=g|_{\Sigma}$. The classical Steklov problem is
\begin{equation}
  \Delta_g u=0 \quad\text{in }M,\qquad
  \partial_\nu u=\sigma u \quad\text{on }\Sigma,
\end{equation}
where $\nu$ is the outward unit normal derivative. Its eigenvalues
form a discrete sequence
\[
  0=\sigma_0(M,g)<\sigma_1(M,g)\leq\sigma_2(M,g)\leq\cdots
  \longrightarrow\infty.
\]
For background and recent developments, see
\cite{BucFNT21,ChenH2026,CGGS2024,GaoWeiZhou2026,GuLiWan2025,GuLiWan2026,KwongWei2023,LevitinMangoubiPolterovich2023,LiWangWu2025,XiaXiong2024,Xiong2018,Xiong2021,Xiong2022,YangYu2017JMP}.

For differential forms, several different Dirichlet-to-Neumann operators
extend the scalar problem, depending on the boundary conditions imposed
on the extension. These include the map of Belishev and Sharafutdinov
\cite{BelishevSharafutdinov2008}, motivated by inverse problems and
Maxwell's equations, the full boundary map studied by Joshi and
Lionheart \cite{JoshiLionheart2005}, and the elliptic self-adjoint
operator of Raulot and Savo \cite{RaulotSavo2012,RaulotSavo2014}.
Karpukhin \cite{Karpukhin2019} modified the Belishev--Sharafutdinov
construction by using coclosed harmonic extensions with free normal
trace. The resulting operator annihilates exact boundary forms and has
a nonnegative self-adjoint realization with compact resolvent on
coclosed forms.  Using a weighted Reilly formula, Xiong \cite{Xiong2026Reilly}
obtained a sharp lower bound for the first positive eigenvalue of
$\Lambda_g$ and comparisons with the boundary Hodge spectrum under
curvature and convexity assumptions. Throughout this paper, we use Karpukhin's Steklov operator $\Lambda_g$, and its Steklov eigenvalues are those of
its restriction to the coclosed boundary space
$\cC^p(\Sigma)$.

In this paper, we resolve four problems concerning the Karpukhin's Steklov operator $\Lambda_g$: \textbf{1).} its pseudodifferential
structure, \textbf{2).} the remainder in its Weyl law, \textbf{3).} equality rigidity, \textbf{4).} 
all-index sharpness of the critical-degree Hersch--Payne--Schiffer
inequality. These are Open Questions~7.19 and~7.23 and the two assertions
of Conjecture~7.27 in the survey of Colbois, Girouard, Gordon, and Sher
\cite{CGGS2024}. The pseudodifferential question was already noted by Karpukhin \cite[Remark~2.4]{Karpukhin2019}. Girouard, Karpukhin, Levitin, and Polterovich established the Weyl law in \cite[Theorem~5.8]{GKLP2022} and discussed improving its remainder in \cite[Remark~5.9]{GKLP2022}. The rigidity and sharpness assertions were originally formulated by Karpukhin \cite[Conjectures~2.12 and~2.13]{Karpukhin2019}.

To formulate these problems and our results precisely, we first recall
the definition of $\Lambda_g$. Following notations in \cite{Karpukhin2019}, denote
by $\nu$ the outward unit normal and by $i:\Sigma\hookrightarrow M$ the
inclusion. For $\varphi\in\Omega^p(\Sigma)$, choose an extension $\omega$
satisfying 
\begin{equation}\label{eq:extension}
  \Delta_g\omega=0,\qquad \delta_g\omega=0,\qquad i^*\omega=\varphi,
\end{equation}
and define
\begin{equation}\label{eq:Lambda}
  \Lambda_g\varphi=i^*\iota_\nu d\omega.
\end{equation}
The value of \eqref{eq:Lambda} is independent of the choice of extension.
For $0\leq p\leq n-2$, write
\[
  \cC^p(\Sigma)=\ker\delta_\Sigma,\qquad
  0\leq\sigma_1^{(p)}(M,g)\leq\sigma_2^{(p)}(M,g)\leq\cdots
\]
for the smooth coclosed space and the spectrum of $\Lambda_g$ on its
$L^2$ closure. Eigenvalues are counted with multiplicity, including zeros.
We use the same convention for the boundary Hodge eigenvalues
$\lambda_k^{\prime(p)}(\Sigma,g_{\Sigma})$ on coclosed $p$-forms. Set
\begin{equation}\label{eq:topological-indices}
  I_p(M)=\dim\im\bigl(i^*:H^p(M;\R)\longrightarrow H^p(\Sigma;\R)\bigr),
  \qquad b_q(M)=\dim H^q(M;\R).
\end{equation}

The classical inequalities of Hersch, Payne, and Schiffer
\cite{HerschPayneSchiffer1975} were extended to higher-dimensional
complementary form degrees by Yang and Yu \cite{YangYu2017} and
Karpukhin \cite{Karpukhin2019}. Yang and Yu use the operator of
Raulot--Savo  \cite{RaulotSavo2012,RaulotSavo2014} ; Karpukhin uses $\Lambda_g$. In dimension $n=2p+2$,
Karpukhin states that \cite[Theorem~2.11]{ Karpukhin2019}
\begin{equation}\label{eq:HPS}
  \bigl(\sigma_{m+I_p(M)}^{(p)}(M,g)\bigr)^2
  \leq
  \lambda_{I_p(M)+b_{p+1}(M)+2m-1}^{\prime(p)}(\Sigma,g_{\Sigma}),
  \qquad m\geq1.
\end{equation}
where $\lambda^{\prime(p)}_k$ is the $k$-th eigenvalue of the Hodge-Laplacian on the space $c\mathcal{C}^p(\Sigma).$
Throughout this paper, we denote
\begin{equation}\label{eq:Dp}
  D_p=\binom{2p+2}{p+1},\qquad N=\left\lceil\frac{2m-1}{D_p}\right\rceil.
\end{equation}
The Euclidean unit ball realizes equality in \eqref{eq:HPS} for every
$1\leq m\leq D_p/2$. The special role of this dimension comes from the 
conformal invariance of the energy of the $(p+1)$-form $d\omega$.

\subsection{The four questions.}
The four questions addressed in this paper concern Karpukhin's Steklov operator
$\Lambda_g$, denoted by $\mathcal D_p^K$ in \cite{CGGS2024}.

\begin{enumerate}
  \renewcommand{\labelenumi}{(\roman{enumi})}

  \item \emph{Pseudodifferential structure.}
  Open Question~7.19 in \cite{CGGS2024} asks whether $\Lambda_g$ is a
  pseudodifferential operator. We prove that it is classical of order
  one and compute its principal symbol. Although the full operator is
  not elliptic in positive degree, its principal symbol is elliptic
  on the coclosed symbol space.

  \item \emph{The Weyl remainder.}
  Open Question~7.23 in \cite{CGGS2024} asks whether the remainder in
  the Weyl law for the coclosed Steklov counting function can be
  improved to $O(\tau^{n-2})$. We establish this estimate for smooth
  metrics and smooth boundaries as a consequence of the symbol
  calculation.

  \item \emph{Equality rigidity.}
  In dimension $n=2p+2$, Conjecture~2.12 of
  \cite{Karpukhin2019} asserts that equality in the critical-degree
  Hersch--Payne--Schiffer inequality throughout the initial range
  $1\leq m\leq D_p/2$ forces the manifold to be a Euclidean ball.
  We disprove this isometry-rigidity assertion by conformal
  deformations of the ball that preserve the boundary metric and
  the full operator $\Lambda_g$.

  \item \emph{Sharpness at every index.}
  Conjecture~2.13 of \cite{Karpukhin2019} asks whether the same
  inequality is asymptotically sharp for every index $m$ and every
  degree $p$. We prove this assertion in the class of smooth compact
  connected oriented Riemannian manifolds with boundary. For
  $p\geq1$, the proof uses a shrinking core with attached Euclidean
  annuli; the case $p=0$ follows from the established planar result
  of Girouard and Polterovich \cite{GirouardPolterovich2010}.
\end{enumerate}

The last two problems are, respectively, the rigidity and sharpness
assertions collected in \cite[ Conjecture~7.27]{CGGS2024}. The sharpness result allows
several boundary components and does not require the interior metric
to be globally Euclidean.

\subsection{Main results}

We denote by $\Psicl{m}(\Sigma;E,F)$ (or simply $\Psicl{m}(\Sigma;E)$ when $E=F$) the space of classical  pseudodifferential operators of order at most $m$ between smooth vector bundles $E$ and $F$ over the closed manifold $\Sigma$. An operator $P\in\Psicl{m}$ possesses a complete symbol with an asymptotic expansion in local coordinates $\sigma(P)(x,\xi)\sim\sum_{j=0}^\infty p_{m-j}(x,\xi)$, where each component $p_{m-j}$ is positively homogeneous of degree $m-j$ in $\xi$ for $|\xi|\geq1$, and its leading term $\sigma_m(P)=p_m$ is the principal symbol of order $m$. We write $\Psi^{-\infty}$ for the class of smoothing operators with smooth Schwartz kernels.

Let $\Pi_p$ be the orthogonal Hodge projection onto coclosed boundary $p$-forms. For $(x,\xi)\in T^*\Sigma\setminus0$, write $|\xi|=|\xi|_{g_{\Sigma}}$ and
\begin{equation}\label{eq:pi}
  \pi_p(x,\xi)=\Id-|\xi|^{-2}\xi\wedge\iota_{\xi^\sharp}.
\end{equation}
This is the orthogonal projection onto $\ker\iota_{\xi^\sharp}$.

\begin{theorem}\label{thm:pdo}
Let $(M^n,g)$ be a smooth compact connected oriented Riemannian manifold of dimension $n\geq2$ with smooth nonempty boundary $\Sigma$, let $g_{\Sigma}=g|_{\Sigma}$ be the induced boundary metric, and let $0\leq p\leq n-2$. Then
\begin{equation}\label{eq:pdo-main}
  \Lambda_g\in\Psicl{1}(\Sigma;\Lambda^pT^*\Sigma),\qquad
  \sigma_1(\Lambda_g)(x,\xi)=|\xi|\pi_p(x,\xi).
\end{equation}
Moreover, $\Pi_p\in\Psicl{0}$ has principal symbol $\pi_p$, and
\begin{equation}\label{eq:compression-main}
  \Lambda_g=\Pi_p\Lambda_g\Pi_p.
\end{equation}
There is a projected parametrix $\mathcal Q_p\in\Psicl{-1}$ such that
\begin{equation}\label{eq:parametrix-main}
  \mathcal Q_p\Lambda_g=\Pi_p+S_1,\qquad
  \Lambda_g\mathcal Q_p=\Pi_p+S_2,\qquad
  S_1,S_2\in\Psi^{-\infty}.
\end{equation}
The coclosed realization is nonnegative and self-adjoint, with domain
$H^1\Omega^p(\Sigma)\cap\ker\delta_\Sigma$ and compact resolvent.
\end{theorem}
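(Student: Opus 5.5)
Our plan is to relate $\Lambda_g$ to a known elliptic boundary problem whose Dirichlet-to-Neumann map is already classical pseudodifferential. We then read off the symbol from the half-space model at a boundary point.

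\textbf{Step 1: reduction to an elliptic boundary problem.} First we show that on coclosed data the extension \eqref{eq:extension} can be realized as a solution of the absolute-type elliptic system
\[
\Delta_g\omega=0,\qquad i^*\omega=\varphi,\qquad i^*\delta_g\omega=0 .
\]
These conditions satisfy the Lopatinskii--Shapiro condition, since this is the relative/absolute Hodge pairing. Given a harmonic $\omega$ with these traces, $\delta\omega$ is harmonic with vanishing tangential trace. It is also coclosed, and the normal part of its trace is controlled by $\delta_\Sigma\varphi$ via $i^*\iota_\nu$ relations. For $\varphi$ coclosed this forces $\delta\omega=0$ modulo the finite-dimensional relative harmonic fields $\HD$. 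A finite-rank correction absorbs these fields and is smoothing. The Boutet de Monvel / Seeley calculus then gives a Poisson operator $K\varphi=\omega$ that is classical of order $0$. Hence $\varphi\mapsto i^*\iota_\nu d K\varphi$ lies in $\Psicl{1}$. For general $\varphi$ we use the Hodge decomposition $\varphi=\Pi_p\varphi+d_\Sigma\alpha$ and the fact (given) that exact boundary forms are annihilated. This yields $\Lambda_g=\Lambda_g\Pi_p$. The identity $\Lambda_g=\Pi_p\Lambda_g$ follows because $\delta_\Sigma i^*\iota_\nu d\omega=-i^*\iota_\nu\delta d\omega=0$ when $\Delta\omega=\delta\omega=0$. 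Together these give \eqref{eq:compression-main}. That $\Pi_p=\Id-d_\Sigma\Delta_\Sigma^{-1}\delta_\Sigma$ (up to harmonic projection) is in $\Psicl0$ with symbol $\Id-|\xi|^{-2}\xi\wedge\iota_{\xi^\sharp}=\pi_p$ is standard, since $\Delta_\Sigma^{-1}$ is a parametrix modulo finite rank. This identification is where the symbol conventions for $\xi\wedge$ must be matched with the factors of $\ii$.

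\textbf{Step 2: principal symbol.} We freeze coefficients at $x$ in boundary normal coordinates and pass to the half-space $\R^{n-1}\times[0,\infty)$ with $t$ the inward distance. For $\varphi=e^{\ii x\cdot\xi}\varphi_0$ with $\iota_\xi\varphi_0=0$, the decaying coclosed harmonic extension is $\omega=e^{\ii x\cdot\xi-|\xi|t}\varphi_0$, which is coclosed because $\iota_\xi\varphi_0=0$ and it has no $dt$ component. Then $\iota_\nu d\omega=|\xi|\omega$, giving symbol $|\xi|$ on $\ker\iota_\xi$. For $\varphi_0=\xi\wedge\beta$ the output is $0$ by the exactness observation. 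Hence $\sigma_1=|\xi|\pi_p$. The lower-order terms produced by curvature and second fundamental form only affect order $\le0$.

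\textbf{Step 3: parametrix and realization.} We set $\mathcal Q_p=\Pi_p\,\mathrm{Op}(q)\,\Pi_p$, where $q$ is a parametrix symbol of the elliptic operator $\Lambda_g+(\Id-\Pi_p)\langle D\rangle$. That operator has principal symbol $|\xi|\pi_p+|\xi|(\Id-\pi_p)=|\xi|\Id$, so it is elliptic. Inverting it modulo $\Psi^{-\infty}$ and compressing by $\Pi_p$, using $\Pi_p^2=\Pi_p$ and $\Pi_p(\Id-\Pi_p)=0$, yields \eqref{eq:parametrix-main}. Self-adjointness and nonnegativity on $\cC^p$ come from Green's formula $\ip{\Lambda_g\varphi}{\psi}{\Sigma}=\ip{d\omega}{d\eta}{M}$. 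Elliptic regularity from the parametrix identifies the domain as $H^1\cap\ker\delta_\Sigma$. The compact embedding $H^1\hookrightarrow L^2$ gives compact resolvent.

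\textbf{Main obstacle.} We expect Step 1 to be the main obstacle: showing that the coclosed extension is given by a classical Poisson operator, i.e.\ that the constraint $\delta\omega=0$ is automatically enforced by an elliptic boundary condition up to finite rank. Our first attempt would be the argument above via the absolute boundary condition $i^*\delta\omega=0$ together with uniqueness for harmonic forms with vanishing tangential trace.
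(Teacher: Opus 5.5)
Your route differs from the paper's and can be completed, but the argument you give at the step you yourself flag as the main obstacle is wrong as written.

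\textbf{Comparison of the two routes.} The paper never poses a mixed boundary problem. It starts from the full Dirichlet problem \eqref{eq:full-Dirichlet}, whose boundary map is in $\Psicl{1}$ with scalar symbol (Joshi--Lionheart). It rewrites the constraint as the elliptic boundary equation $A\beta=B\varphi$ for the normal trace $\beta$. It treats the kernel of $A$ (the normal traces of $\HD^p(M)$) through the compatibility identity $P_KB=0$. This yields the exact Schur complement $\Lambda=R-B^*(A+P_K)^{-1}B$, from which $\sigma_1(\Lambda)=|\xi|\Id-|\xi|^{-1}\xi\wedge\iota_{\xi^\sharp}$ is read off algebraically.

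You instead build the constraint into the boundary conditions. You use the relative (not absolute) elliptic problem $(\Delta;\,i^*,\,i^*\delta)$ and the Boutet de Monvel calculus, so that $\Lambda$ is a trace operator composed with a Poisson operator. Your symbol computation is fine: the half-line model on $\ker\iota_{\xi^\sharp}$, together with $\sigma_1(\Lambda)\,\ii\xi\wedge=\sigma_2(\Lambda d_\Sigma)=0$ on the complement. Your proof of $\Pi_p\Lambda=\Lambda$ from $\delta_\Sigma i^*\iota_\nu=-i^*\iota_\nu\delta$ is also fine.

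Your approach is more conceptual, but it requires the full boundary-problem calculus for a Dirichlet--Robin system with nontrivial kernel. The paper's Schur complement is essentially an explicit construction of your Poisson operator from the full Dirichlet one, and it stays inside the pseudodifferential calculus on $\Sigma$.

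\textbf{The faulty step: coclosedness of your solution.} From $\Delta\omega=0$ and $i^*\delta\omega=0$, your argument yields at most $\delta\omega\in\HD^{p-1}(M)$. No finite-rank correction can remove this. The non-uniqueness of your problem is $\HD^p(M)$, and adding elements of $\HD^p(M)$ to $\omega$ leaves $\delta\omega$ unchanged. Moreover, the normal trace of $\delta\omega$ is $-\delta_\Sigma i^*\iota_\nu\omega$. It is governed by the normal component of $\omega$, not by $\delta_\Sigma\varphi$, and the coclosedness of $\varphi$ plays no role.

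What is missing is one more application of \eqref{eq:Green}, with $\alpha=\delta\omega$ and $\beta=\omega$:
$\|\delta\omega\|_{L^2(M)}^2=\ip{d\delta\omega}{\omega}{M}-\ip{i^*\delta\omega}{i^*\iota_\nu\omega}{\Sigma}=0$.
This is Lemma~\ref{lem:coclosed}. It shows that for every $\varphi$ your solution set coincides with the solution set of \eqref{eq:extension}. Consequently:
solvability for all $\varphi$ follows from Karpukhin's existence result;
the only finite-dimensional issue is the kernel $\HD^p(M)$, which is harmless since $d$ kills it;
and the detour through $\Pi_p$ in Step 1 is unnecessary.

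\textbf{A smaller point in Step 3.} Your one-sided completion $\mathcal L=\Lambda+(\Id-\Pi_p)\langle D\rangle$ gives $\Pi_p\mathcal L=\Lambda$, and hence $\Lambda\mathcal Q_p=\Pi_p+S_2$ directly. However, $\mathcal Q_p\Lambda=\Pi_p+S_1$ does not follow from the same manipulation unless $\mathcal L\Pi_p=\Lambda$, i.e.\ $[\langle D\rangle,\Pi_p]=0$. Any of the following fixes it:
take $\langle D\rangle=(\Id+\Delta_\Sigma)^{1/2}$ with the Hodge Laplacian;
compress on both sides as the paper does;
or derive the left identity from the right one by taking adjoints, using self-adjointness of $\Lambda$ and $\Pi_p$ and uniqueness of parametrices.
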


For $p\geq1$, the full symbol in \eqref{eq:pdo-main} has a kernel; the
full operator is not elliptic. Its restriction to the symbol range of
$\Pi_p$ is $|\xi|\Id$. Thus Theorem~\ref{thm:pdo} answers Open~Question~7.19 in  \cite{CGGS2024} affirmatively and supplies the precise projected ellipticity
suggested in \cite[Remark~2.4]{Karpukhin2019}.

The proof expresses $\Lambda$ as a Schur complement of the full
Dirichlet boundary map for the Hodge Laplacian. A finite-dimensional
correction treats the kernel of the boundary operator used to impose
$\delta\omega=0$. In particular, no vanishing cohomology assumption
on $M$ is required.

As observed in \cite{GKLP2022}, the explicit computation of the principal symbol yields the Weyl asymptotics for the coclosed Steklov spectrum. Moreover, the pseudodifferential structure established here gives the sharp \(O(\tau^{n-2})\) remainder, thereby resolving the smooth-boundary case of Open Question~7.23 in \cite{CGGS2024}.
Here and throughout, $\omega_n$ denotes the volume of the Euclidean unit $n$-ball.

\begin{corollary}\label{cor:Weyl}
Let $0\leq p\leq n-2$.
For the eigenvalues of the coclosed realization,
\begin{equation}\label{eq:Weyl}
  \#\{j:\sigma_j^{(p)}(M,g)<\tau\}
  =\binom{n-2}{p}\frac{\omega_{n-1}\Vol_{g_{\Sigma}}(\Sigma)}{(2\pi)^{n-1}}\tau^{n-1}
   +O(\tau^{n-2})\qquad(\tau\longrightarrow\infty).
\end{equation}
In particular, the smooth case of Open Question~7.23 in \cite{CGGS2024} has an
affirmative answer.
\end{corollary}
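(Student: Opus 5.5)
The plan is to deduce Corollary~\ref{cor:Weyl} from Theorem~\ref{thm:pdo} by the standard Hörmander--Ivrii machinery for first-order self-adjoint classical operators, applied after an elliptic modification that removes the exact forms.

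\textbf{Step 1: an elliptic comparison operator.} Set $\Delta_\Sigma=d_\Sigma\delta_\Sigma+\delta_\Sigma d_\Sigma$ on $\Omega^p(\Sigma)$ and define
\[
  A=\Lambda_g+(\Id-\Pi_p)\,(1+\Delta_\Sigma)^{1/2}(\Id-\Pi_p).
\]
By Theorem~\ref{thm:pdo}, $\Lambda_g\in\Psicl{1}$ and $\Pi_p\in\Psicl{0}$ with symbol $\pi_p$. Also $(1+\Delta_\Sigma)^{1/2}\in\Psicl{1}$ has symbol $|\xi|\Id$. Hence $A\in\Psicl{1}$ and
\[
  \sigma_1(A)=|\xi|\pi_p+|\xi|(\Id-\pi_p)=|\xi|\Id .
\]
This symbol is scalar, elliptic and self-adjoint. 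Using \eqref{eq:compression-main}, $A$ is formally self-adjoint and commutes with $\Pi_p$: the operator $(1+\Delta_\Sigma)^{1/2}$ commutes with the Hodge projection because $\Delta_\Sigma$ does. Therefore $A$ splits orthogonally as $\Lambda_g$ on $\ker\delta_\Sigma$ plus $(1+\Delta_\Sigma)^{1/2}$ on the orthogonal complement $\overline{\im d_\Sigma}$. On that complement the kernel of $\Delta_\Sigma$ is zero, since harmonic forms are coclosed. It follows that
\[
  N_A(\tau)=N_{\mathrm{cocl}}(\tau)+N_{\mathrm{ex}}(\tau),
\]
where $N_{\mathrm{ex}}(\tau)$ counts the eigenvalues $\sqrt{1+\lambda}$ of $\Delta_\Sigma$ on exact $p$-forms.

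\textbf{Step 2: sharp Weyl laws for both pieces.} Hörmander's theorem for first-order elliptic self-adjoint classical operators with scalar principal symbol on the closed manifold $\Sigma^{n-1}$ applies to $A$ on the bundle $\Lambda^pT^*\Sigma$ of rank $\binom{n-1}{p}$. It gives
\[
  N_A(\tau)=\binom{n-1}{p}\frac{\omega_{n-1}\Vol(\Sigma)}{(2\pi)^{n-1}}\tau^{n-1}+O(\tau^{n-2}).
\]
For $N_{\mathrm{ex}}$, note that $d_\Sigma$ is an isomorphism from coexact $(p-1)$-forms onto exact $p$-forms that intertwines $\Delta_\Sigma$. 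So $N_{\mathrm{ex}}(\tau)$ equals the coexact counting function in degree $p-1$. By induction on $p$ this coexact count has sharp asymptotics with coefficient $\binom{n-2}{p-1}$. The induction runs as follows. In each degree $q$, Hörmander's theorem gives the full count for $(1+\Delta_\Sigma)^{1/2}$ with coefficient $\binom{n-1}{q}$ and remainder $O(\tau^{n-2})$. The full count splits as exact $+$ coexact $+\,b_q(\Sigma)$, and the exact count in degree $q$ equals the coexact count in degree $q-1$. Starting from degree $0$, where the exact count is $0$, Pascal's identity then gives coexact coefficient $\binom{n-2}{q}$ at each step with the same remainder. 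This is the standard elementary induction; each step loses only $O(1)$ from the finite-dimensional harmonic forms.

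\textbf{Step 3: subtraction.} Subtracting the two expansions,
\[
  N_{\mathrm{cocl}}(\tau)=\Bigl[\binom{n-1}{p}-\binom{n-2}{p-1}\Bigr]\frac{\omega_{n-1}\Vol(\Sigma)}{(2\pi)^{n-1}}\tau^{n-1}+O(\tau^{n-2}).
\]
By Pascal's identity the bracket is $\binom{n-2}{p}$, which is \eqref{eq:Weyl}. The eigenvalues of the coclosed realization include zeros, but there are finitely many of them, so they only contribute $O(1)$.

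\textbf{Main obstacle.} The delicate point is Step 1: checking that $A$ is genuinely self-adjoint with the claimed orthogonal splitting, so that $N_A$ is exactly the sum of the two counts. This requires \eqref{eq:compression-main}, the self-adjointness of the coclosed realization of $\Lambda_g$, and the fact that $\Lambda_g$ annihilates exact forms (so $\Lambda_g(\Id-\Pi_p)=0$). Once the splitting holds, the $O(\tau^{n-2})$ remainder needs no dynamical assumption, because Hörmander's bound is unconditional for a scalar principal symbol.
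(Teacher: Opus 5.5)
Your proof is correct, but it takes a different route from the paper's. The shared piece is the coexact count: the paper proves $C_p(\tau)=\binom{n-2}{p}c_\Sigma\tau^{n-1}+O(\tau^{n-2})$ for the boundary Hodge Laplacian by the same alternating induction you use.

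The routes differ in how this count is transferred to $\Lambda_g$. The paper never applies a Weyl law to an operator built from $\Lambda_g$. Instead it observes that $\Lambda_g-\Pi_p(\Id+\Delta_{\Sigma,p})^{1/2}\Pi_p$ has vanishing principal symbol, so it lies in $\Psicl{0}$ and is $L^2$-bounded. Min--max then gives $|\sigma_j^{(p)}-\sqrt{1+\lambda_j^{\prime(p)}}|\le C$, and shifting $\tau$ by $\pm C$ costs only $O(\tau^{n-2})$.

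You instead apply the sharp Weyl law directly to the elliptic completion $A$ and subtract the exact-form count. Your $A$ is exactly the paper's $\mathcal L$ in \eqref{eq:elliptic-completion}. The fact that $\Pi_p$ reduces its self-adjoint realization, leaving $\Lambda_g$ on the coclosed part, is already verified in the proof of Theorem~\ref{thm:pdo}. So the step you flag as the main obstacle is settled there, and on the exact part $A$ is just $(\Id+\Delta_\Sigma)^{1/2}$.

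The trade-off is as follows. The paper's argument is more economical: its only spectral-asymptotic input is the Weyl law for the Hodge Laplacians. Your argument additionally needs the H\"ormander--Ivrii sharp remainder for a first-order classical pseudodifferential system with scalar principal symbol $|\xi|\Id$ but nonscalar lower-order part. This is standard, since the small-time wave-kernel argument only sees the principal symbol, but it is a slightly heavier citation. In exchange, you avoid the min--max comparison.

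Your route is also structurally sharper. An $O(1)$ eigenvalue shift can never give a remainder better than $O(\tau^{n-2})$. By contrast, any refinement of the counting asymptotics for $A$ and for $\Delta_\Sigma$ would pass directly to the coclosed Steklov count through your exact splitting $N_A=N_{\mathrm{cocl}}+N_{\mathrm{ex}}$.
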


Our next result shows that the conformal invariance proved in
\cite[Proposition~2.10]{Karpukhin2019} is incompatible with the proposed
isometry rigidity.

\begin{theorem}\label{thm:nonrigidity}
Let $p\geq0$, $n=2p+2$, and let $g_E$ be the Euclidean metric on the
closed unit ball $\mathbb{B}^n$. If $u\in C^\infty(\mathbb{B}^n)$ and $u|_{\partial \mathbb{B}^n}=0$,
then
\begin{equation}\label{eq:ball-invariance}
  \Lambda_{e^{2u}g_E}=\Lambda_{g_E}.
\end{equation}
The boundary Hodge spectrum is unchanged, and equality in
\eqref{eq:HPS} holds for every $1\leq m\leq D_p/2$.
If $0\ne u\in C_c^\infty(\Int \mathbb{B}^n)$ and $u\geq0$, then
$(\mathbb{B}^n,e^{2u}g_E)$ is not isometric to any Euclidean ball, although it
agrees with the Euclidean metric on an entire boundary collar.
For a fixed such $u$, the metrics $e^{2tu}g_E$, $t>0$, are pairwise
nonisometric.
\end{theorem}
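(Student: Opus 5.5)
The plan is to prove the invariance \eqref{eq:ball-invariance} from conformal invariance in the critical dimension, and then read off the remaining claims. Write $\tilde g=e^{2u}g_E$.

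\textbf{Step 1: the boundary data agree.} Since $u|_{\partial\mathbb{B}^n}=0$, the induced boundary metrics coincide. Hence $\delta_\Sigma$, $\cC^p(\Sigma)$ and the boundary Hodge spectrum are identical for $g_E$ and $\tilde g$. The outward unit normals also coincide on $\Sigma$, because $\tilde\nu=e^{-u}\nu=\nu$ there.

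\textbf{Step 2: $\Lambda$ is determined by a conformally invariant variational problem.} On $(p+1)$-forms in dimension $n=2p+2$, the Hodge star $\star:\Lambda^{p+1}\to\Lambda^{p+1}$ is conformally invariant. Hence so is the energy $E(\omega)=\int_M d\omega\wedge\star d\omega$.

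I would use Karpukhin's characterization behind \cite[Proposition~2.10]{Karpukhin2019}, or rederive it, as follows. For $\varphi\in\cC^p(\Sigma)$ and any $p$-form $\omega$ with $i^*\omega=\varphi$, the infimum of $E(\omega)$ is attained exactly by forms with $\delta d\omega=0$. Up to adding a closed form, such a minimizer can be taken to satisfy \eqref{eq:extension}.

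The minimizer's $d\omega$ is unique, since $E$ is strictly convex in $d\omega$ on the affine space of extensions. Moreover $d\omega$ is metric-independent in the critical dimension: the Euler--Lagrange equation is $d\star d\omega=0$, and $\star$ on $(p+1)$-forms is invariant. So the coclosed harmonic extensions $\omega_{g_E}$ and $\omega_{\tilde g}$ satisfy $d\omega_{g_E}=d\omega_{\tilde g}$. Independence of \eqref{eq:Lambda} from the choice of extension handles the ambiguity by closed forms.

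Then $\Lambda_{\tilde g}\varphi=i^*\iota_{\tilde\nu}d\omega=i^*\iota_\nu d\omega=\Lambda_{g_E}\varphi$ by Step 1. Equivalently, one can write $\Lambda\varphi$ through the boundary identity $i^*\iota_\nu d\omega=\pm\star_\Sigma i^*\star d\omega$, whose ingredients are all invariant.

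Both operators annihilate exact boundary forms and agree on $\cC^p(\Sigma)$. Via the Hodge decomposition on $\Sigma$ (which is identical for both metrics) they agree on all of $\Omega^p(\Sigma)$.

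\textbf{Main obstacle.} The key point is justifying that the $\tilde g$-extension satisfying $\Delta_{\tilde g}\omega=0$ and $\delta_{\tilde g}\omega=0$ has the same $d\omega$ as the Euclidean one. The conditions $\Delta_{\tilde g}\omega=0$ and $\delta_{\tilde g}\omega=0$ are not conformally invariant, but together they imply $\delta_{\tilde g}d\omega=0$, that is, $d\star_{\tilde g}d\omega=0$, which is invariant. Conversely, given a Euclidean extension $\omega_E$, I would solve for a closed correction $\eta=d\alpha$ with $i^*\alpha=0$ such that $\delta_{\tilde g}(\omega_E+d\alpha)=0$. This is a Dirichlet problem for $\delta_{\tilde g}d\alpha$, which can be solved via the Hodge--Morrey decomposition; the ball has no relative cohomology obstruction. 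Then $\omega_E+d\alpha$ satisfies \eqref{eq:extension} for $\tilde g$ with the same $d\omega$.

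\textbf{Step 3: equality in \eqref{eq:HPS}.} The spectra of $\Lambda$ and of the boundary Hodge Laplacian are unchanged, and $I_p$ and $b_{p+1}$ are topological. So the equalities for the ball in the range $1\le m\le D_p/2$ transfer.

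\textbf{Step 4: nonisometry.} For $0\ne u\ge0$ compactly supported, $\tilde g=g_E$ on a collar. Suppose $(\mathbb{B}^n,\tilde g)$ were isometric to a Euclidean ball. That ball would have radius $1$, since an isometry restricts to the boundaries and the boundary is the unit sphere of area $|S^{n-1}|$. But $\Vol(\tilde g)=\int e^{nu}>\omega_n$, a contradiction.

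For $0<s<t$, the volume $\int e^{ntu}$ is strictly increasing in $t$, because $u\ge0$ and $u\ne0$ on an open set. Volume is an isometry invariant, so the metrics $e^{2tu}g_E$ are pairwise nonisometric.
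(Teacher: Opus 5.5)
Your proof is correct and follows the paper's route. Both rely on conformal invariance of $\int_M|d\omega|^2\,dV$ in degree $p+1=n/2$ together with the Dirichlet-principle characterization of $\Lambda$, and then use the same ball-spectrum transfer and volume argument. The one difference is in the final step: you deduce $\Lambda_{\tilde g}=\Lambda_{g_E}$ from uniqueness of the minimizing $d\omega$ and the formula $i^*\iota_\nu d\omega$, whereas the paper shows $q_{\tilde g}=q_{g}$ and polarizes. Your Hodge--Morrey correction is unnecessary, since Karpukhin's existence of coclosed harmonic extensions for each metric already supplies the $\tilde g$-minimizer.
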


Hence Conjecture~2.12 of Karpukhin, also assertion (1) of
Conjecture~7.27 in \cite{CGGS2024}, are false in the Riemannian category. The counterexamples
preserve the complete boundary operator and therefore all its spectral
data.

The sharpness assertion has a positive answer. The quantitative statement
below is independent of \eqref{eq:HPS}.

\begin{theorem}\label{thm:cluster}
Fix $p\geq1$, $N\geq1$ and put $n=2p+2$. There exists a family of smooth compact connected
oriented Riemannian manifolds $(M_\epsilon,g_\epsilon)$,
$0<\epsilon<1$, such that
\begin{equation}\label{eq:cluster-boundary}
  (\partial M_\epsilon,g_\epsilon|_{T\partial M_\epsilon})
  \cong\coprod_{j=1}^N(\mathbb{S}^{2p+1},g_{\mathbb{S}}),\qquad
  I_p(M_\epsilon)=b_{p+1}(M_\epsilon)=0,
\end{equation}
where $\cong$ denotes an isometry and $g_{\mathbb{S}}$ is the unit round metric.
With $T=\log(1/\epsilon)$,
\begin{equation}\label{eq:cluster-bounds}
  (p+1)\tanh((p+1)T)\leq\sigma_j^{(p)}(M_\epsilon,g_\epsilon)
  \leq(p+1)\coth((p+1)T),\qquad 1\leq j\leq ND_p.
\end{equation}
In particular, the first $ND_p$ eigenvalues converge to $p+1$ while the
entire boundary metric remains fixed.
\end{theorem}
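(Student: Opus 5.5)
The plan is to build $M_\epsilon$ as a connected core with $N$ thin conformal necks attached, each neck a Euclidean annulus $A_\epsilon=\{\epsilon\leq|x|\leq1\}\subset\R^{2p+2}$ whose outer sphere is a boundary component. I would then compare $\sigma_j^{(p)}$ with two explicitly solvable model problems on $A_\epsilon$, one with a Dirichlet-type and one with a free inner boundary condition. This gives the upper bound with $\coth$ and the lower bound with $\tanh$.

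\emph{Construction and topology.} I take $M_\epsilon$ to be diffeomorphic to $\mathbb S^{2p+2}$ with $N$ disjoint open balls removed, so $\partial M_\epsilon\cong\coprod_{j=1}^N\mathbb S^{2p+1}$. Its real cohomology lives only in degrees $0$ and $2p+1$. Since $p\geq1$ we have $1\leq p<p+1<2p+1$, hence $H^p(M_\epsilon)=H^{p+1}(M_\epsilon)=0$, which gives $I_p=b_{p+1}=0$ in \eqref{eq:cluster-boundary}. For the metric, each end is a copy of $A_\epsilon$ with $g_E$, so each boundary component is isometric to $(\mathbb S^{2p+1},g_{\mathbb S})$. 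The inner spheres $|x|=\epsilon$ are glued smoothly to a core $K_\epsilon$: a sphere with $N$ holes, scaled by $\epsilon$ and with a metric that is Euclidean near the gluing spheres. By the conformal invariance in the critical dimension (\cite[Proposition~2.10]{Karpukhin2019}, as used in Theorem~\ref{thm:nonrigidity}), only the conformal class with the fixed boundary metric matters. In the coordinate $t=\log(1/|x|)$ each neck is conformal to the cylinder $[0,T]\times\mathbb S^{2p+1}$ with $T=\log(1/\epsilon)$.

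\emph{Variational framework and model computations.} I will use Karpukhin's characterization
\[
\sigma_j^{(p)}=\min_{\dim V=j}\max_{\varphi\in V}\frac{\inf\{\|d\omega\|^2_{L^2(M)}: i^*\omega=\varphi\}}{\|\varphi\|^2_{L^2(\Sigma)}},\qquad V\subset\cC^p(\Sigma).
\]
In the critical dimension the numerator is conformally invariant, so I may compute on the cylinder. I separate variables with coclosed eigenforms $\psi$ of the Hodge Laplacian on $\mathbb S^{2p+1}$, writing $\omega=f(t)\psi+h(t)\,dt\wedge\chi$. The lowest coclosed eigenvalue is $(p+1)^2$, with multiplicity $D_p$; the eigenforms are $\iota_{x}$ of constant $(p+1)$-forms, restricted to the sphere. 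The minimizing ODE is $f''=(p+1)^2f$ with $f(0)=1$, and the minimal energy is $(p+1)f'(0)$-type. With inner condition $f(T)=0$ (Dirichlet) this gives $(p+1)\coth((p+1)T)$. With the natural condition $f'(T)=0$ (free) it gives $(p+1)\tanh((p+1)T)$. A higher coclosed eigenvalue $k^2$ with $k>p+1$ gives $k\tanh(kT)\geq(p+1)\tanh((p+1)T)$, since $s\mapsto s\tanh(sT)$ is increasing.

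\emph{Upper bound.} I take $V$ spanned by the $ND_p$ forms equal to a lowest eigenform on one boundary sphere and $0$ on the others. For the extension I use the Dirichlet minimizer on that neck, which vanishes at $|x|=\epsilon$, and extend it by zero across the core and the other necks. This is an admissible $H^1$ competitor, and the Rayleigh quotients diagonalize across necks. This gives $\sigma_{ND_p}^{(p)}\leq(p+1)\coth((p+1)T)$.

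\emph{Lower bound.} Here I discard the energy on $K_\epsilon$ and relax the extension class to arbitrary forms on $\coprod A_\epsilon$ with trace $\varphi$ and free inner trace. This can only decrease the quotient, so $\sigma_1^{(p)}$ dominates the bottom of the free annulus problem. By the mode analysis, that bottom is $(p+1)\tanh((p+1)T)>0$, because every coclosed eigenvalue on $\mathbb S^{2p+1}$ is at least $(p+1)^2$ and no harmonic coclosed $p$-forms exist there. Monotonicity of $\sigma_j$ in $j$ then gives \eqref{eq:cluster-bounds} for all $j\leq ND_p$.

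\emph{Expected obstacle.} I expect the hardest step to be the separation-of-variables computation on the cylinder for forms rather than functions. One must show that the normal component $h\,dt\wedge\chi$ and the exact parts never lower the energy below the $f$-mode value. One must also show that the free-boundary relaxation is still governed by the same ODE, so that no spurious low modes appear, and that the coclosedness and trace constraints survive the conformal change. I would first handle this by checking that, for coclosed $\psi$, the energy decouples as $\int(|f'|^2+(p+1)^2|f|^2)\,dt$ plus nonnegative terms.
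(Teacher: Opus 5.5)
Your proposal is correct and follows essentially the paper's route: the same $\epsilon$-scaled core with $N$ attached Euclidean annuli, and conformal reduction of each annulus to $[0,T]\times\mathbb S^{2p+1}$. The upper bound uses the zero-end profile $\sinh((p+1)(T-t))/\sinh((p+1)T)$ times first coclosed eigenforms, extended by zero on the core, and the lower bound is a free-end estimate on the annuli after discarding the core energy. The decoupling you flag as the main obstacle is handled in the paper as you planned: the tangential part $\alpha(t)$ of $\omega=\alpha+dt\wedge\beta$ is projected onto coclosed forms, $u=P\alpha$, so that $d_{\mathbb S}\alpha=d_{\mathbb S}u$ and $\|\alpha'-d_{\mathbb S}\beta\|\ge\|u'\|$. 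The paper then uses a single Hilbert-space square completion instead of your mode-by-mode comparison via monotonicity of $s\tanh(sT)$, and it smooths the zero extension by a cutoff rather than using it directly as an $H^1$ competitor.
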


\begin{corollary}\label{cor:sharpness}
For every $p\geq1$ and $m\geq1$. 
The family in Theorem~\ref{thm:cluster} satisfies
\begin{equation}\label{eq:deficit-main}
\begin{split}
  \lambda_{2m-1}^{\prime(p)}(\partial M_\epsilon)&=(p+1)^2,\\
  0\leq (p+1)^2-\bigl(\sigma_m^{(p)}(M_\epsilon,g_\epsilon)\bigr)^2
  &\leq\frac{4(p+1)^2\epsilon^{2(p+1)}}{(1+\epsilon^{2(p+1)})^2}.
\end{split}
\end{equation}
Thus the Hersch-Payne-Schiffer inequality \eqref{eq:HPS} is asymptotically sharp at every index in every
positive degree. Together with the planar result of Girouard and
Polterovich in \cite{GirouardPolterovich2010}, this establishes sharpness for all $p\geq0$ and $m\geq1$.
\end{corollary}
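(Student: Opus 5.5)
The plan is to read Corollary~\ref{cor:sharpness} off Theorem~\ref{thm:cluster}, taking the cluster index $N=\lceil (2m-1)/D_p\rceil$ from \eqref{eq:Dp}. We need two ingredients: the boundary Hodge eigenvalue $\lambda_{2m-1}^{\prime(p)}$ on $\coprod_{j=1}^N \mathbb{S}^{2p+1}$, and the eigenvalue bounds \eqref{eq:cluster-bounds}.

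\emph{Boundary spectrum.} On the round $\mathbb{S}^{2p+1}$ with $p\geq1$, we have $H^p(\mathbb{S}^{2p+1})=0$. So there are no coclosed harmonic $p$-forms, and the spectrum of the Hodge Laplacian on coclosed $p$-forms starts at the eigenvalue of coclosed eigenforms of lowest degree $k=1$. By the standard spectrum of coexact $p$-forms on $\mathbb{S}^{n-1}$, this eigenvalue is $(k+p)(k+n-2-p)$ with $n-1=2p+1$, which gives $(p+1)^2$ at $k=1$.

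Its multiplicity should be $D_p=\binom{2p+2}{p+1}$. Concretely, the eigenspace is spanned by the restrictions $i^*\iota_{x}(dx_{I})$ with $|I|=p+1$, which equal the boundary forms $i^*\star$ of constant $(p+1)$-forms up to duality. I would check this count against the equality case of the ball in \eqref{eq:HPS}: the paper says equality holds for $m\le D_p/2$, which forces $\lambda'_{2m-1}=1\cdot(p+1)^2$ for $2m-1\le D_p-1$. On $N$ disjoint copies, the multiplicity is $ND_p\ge 2m-1$, so $\lambda_{2m-1}^{\prime(p)}(\partial M_\epsilon)=(p+1)^2$.

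\emph{Steklov bounds.} Since $m\le 2m-1\le ND_p$, Theorem~\ref{thm:cluster} gives
\[
(p+1)\tanh((p+1)T)\le\sigma_m^{(p)}\le(p+1)\coth((p+1)T).
\]
For the lower side of the deficit I cannot just use the upper bound, since $\coth>1$. Instead I would invoke \eqref{eq:HPS} with $I_p=b_{p+1}=0$, which gives $(\sigma_m^{(p)})^2\le\lambda'_{2m-1}=(p+1)^2$. For the upper side of the deficit, the lower bound gives
\[
(p+1)^2-\sigma_m^2\le(p+1)^2\bigl(1-\tanh^2((p+1)T)\bigr)=(p+1)^2\operatorname{sech}^2((p+1)T).
\]
With $e^{-T}=\epsilon$, this equals $4(p+1)^2\epsilon^{2(p+1)}/(1+\epsilon^{2(p+1)})^2$, which is \eqref{eq:deficit-main}. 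Letting $\epsilon\to0$ yields asymptotic sharpness at index $m$. The case $p=0$ is the cited planar result of Girouard--Polterovich.

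\emph{Main obstacle.} The one point needing care is the boundary-spectrum count: confirming that the first coclosed eigenvalue on $\mathbb{S}^{2p+1}$ is exactly $(p+1)^2$ with multiplicity exactly $D_p$. I would verify this via the decomposition of the restrictions of constant $(p+1)$-forms. An alternative check is the Ikeda--Taniguchi formula for the coexact spectrum.
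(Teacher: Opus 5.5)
Your proposal is correct and follows the paper's proof essentially step for step. The shared steps are: the choice $N=\lceil(2m-1)/D_p\rceil$ so that $m\le 2m-1\le ND_p$; the first coclosed eigenvalue $(p+1)^2$ with multiplicity $D_p$ on each boundary sphere (the content of Lemma~\ref{lem:indices-spectrum}); \eqref{eq:HPS} with $I_p=b_{p+1}=0$ for the nonnegativity of the deficit; and the $\tanh$ lower bound turned into $(p+1)^2\operatorname{sech}^2((p+1)T)=4(p+1)^2\epsilon^{2(p+1)}/(1+\epsilon^{2(p+1)})^2$. For $p=0$ you give only a bare citation, whereas the paper also smooths the Lipschitz Girouard--Polterovich domains into the smooth class and matches indices via $\sigma^{(0)}_{m+1}=s_m$ and $\lambda^{\prime(0)}_{2m}$.
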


The comparison eigenvalue in \eqref{eq:deficit-main} is fixed and positive.
The family has $N$ boundary components, and its connecting core carries
a smooth Riemannian metric. The assertion is made in the Riemannian class
specified above; sharpness for Euclidean domains with connected boundary
requires a separate argument.

\subsection{Method and relation to previous work}

The operator proof starts with the full Dirichlet-to-Neumann map for
harmonic forms studied by Joshi and Lionheart \cite{JoshiLionheart2005}.
Solving the coclosedness constraint gives an exact boundary-operator
formula. Its symbol identifies both the degeneracy on exact forms and
the ellipticity on the coclosed symbol space. This also gives a short
proof of the smooth Weyl remainder requested in \cite[Open
Question~7.23]{CGGS2024}; see Corollary~\ref{cor:Weyl}. Girouard, Karpukhin, Levitin,
and Polterovich \cite{GKLP2022} had obtained a comparison with the
boundary Hodge spectrum and a Weyl law with a little-$o$ remainder.

The geometric construction follows the scaled-core and conical-collar
models of Ann\'e and Takahashi
\cite{AnneTakahashi2012,AnneTakahashi2015,AnneTakahashi2024}.
We attach $N$ identical Euclidean annuli to a shrinking core and keep
their outer spheres as the Steklov boundary. The estimates concern the
minimized boundary energy $\int_M|d\omega|^2\,dV$. In the critical
dimension this energy becomes a product-cylinder energy. The coclosed
Hodge projection removes the exact tangential contribution, and the two
scalar endpoint problems give the factors $(p+1)\tanh((p+1)T)$ and $(p+1)\coth((p+1)T)$.
Thus the spectral bounds follow without a general gluing convergence
theorem. The specific geometric models are described in
\cite[\S1.1]{AnneTakahashi2015} and
\cite[\S5.3]{AnneTakahashi2024}.

For $p=0$, all-index sharpness is the planar theorem of Girouard and
Polterovich \cite{GirouardPolterovich2010}, combined with fixed-domain
smoothing. The new geometric estimates concern $p\geq1$. The non-rigidity
result uses Karpukhin's existing conformal invariance; its content is the
resulting obstruction to his proposed equality characterization.

The paper is organized as follows. Section~\ref{sec:preliminaries} establishes the Hodge-theoretic framework and variational conventions used throughout. In Section~\ref{sec:pdo}, we prove Theorem~\ref{thm:pdo} and derive the corresponding sharp Weyl remainder estimate. Section~\ref{sec:conformal} is devoted to the proof of Theorem~\ref{thm:nonrigidity}. Finally, in Sections~\ref{sec:construction} and~\ref{sec:energy}, we construct the degenerating family, establish the required energy estimates, and prove Theorem~\ref{thm:cluster} and Corollary~\ref{cor:sharpness}.

\section{Preliminaries}\label{sec:preliminaries}

We use Karpukhin's notation in \cite{Karpukhin2019} and the nonnegative Hodge Laplacian
$\Delta=d\delta+\delta d$. All forms are smooth up to the boundary unless
a Sobolev space is specified. The geometric arguments may be carried out
over $\R$; we complexify the bundles and use Hermitian inner products
when using pseudodifferential symbols. Following Karpukhin,
$\langle\cdot,\cdot\rangle$ denotes the pointwise inner product, and
\[
  \ip{\alpha}{\beta}{M}
  =\int_M\langle\alpha,\beta\rangle\,dV_M
\]
denotes the integrated inner product.
For a smooth subspace $V$, $H^sV$ denotes its completion in the Sobolev
$H^s$ norm, and $L^2V=H^0V$.

\subsection{Traces and Green's formula}

For a form $\omega$ on $M$, the tangential and normal components are
$\bt\omega$ and $\bn\omega$. A boundary form is identified with its tangential
representative in $\Lambda^*T^*M|_\Sigma$, and
\[
  \bt\omega= i^*\omega,\qquad
  \bn\omega=\nu^\flat\wedge \iota_{\nu}\omega,\qquad
  \omega|_\Sigma=\bt\omega+\bn\omega.
\]
Thus the full boundary value is determined by the pair
$(i^*\omega,i_\nu\omega)$.

For $\alpha\in\Omega^q(M)$ and $\beta\in\Omega^{q+1}(M)$, Green's formula is
\begin{equation}\label{eq:Green}
  \ip{d\alpha}{\beta}{M}
  =\ip{\alpha}{\delta\beta}{M}
   +\ip{i^*\alpha}{i^*\iota_\nu\beta}{\Sigma}.
\end{equation}
Applying this identity twice gives, for $p$-forms $u,v$,
\begin{equation}\label{eq:Green-energy}
\begin{split}
  \ip{du}{dv}{M}+\ip{\delta u}{\delta v}{M}
  &=\ip{\Delta u}{v}{M}+\ip{i^*\iota_\nu du}{i^*v}{\Sigma}\\
  &\quad-\ip{i^*\delta u}{i^*\iota_\nu v}{\Sigma}.
\end{split}
\end{equation}
We distinguish the harmonic fields
\[
  \mathcal H^p(M)=\{u\in \Omega^p(M):du=0,\ \delta u=0\}
\]
from the solutions of $\Delta u=0$, which need not be closed or coclosed
on a manifold with boundary. The relative harmonic fields are
\[
  \HD^p(M)=\{u\in\mathcal H^p(M):i^*u=0\},\qquad \mathcal{H}_{N}^p(M) =\{ u\in \mathcal{H}^p(M) :\iota_\nu u=0 \}.
\]
The finite-dimensional spaces $\mathcal{H}_N^p(M)$ and $\mathcal{H}_D^p(M)$ are isomorphic to $H^p(M,\mathbb{R})$ and $H^p(M,\partial M,\mathbb{R})$, respectively. The
standard boundary Hodge theory is developed in \cite{Schwarz1995}.

We will use the following consequence of \eqref{eq:Green}, proved in
\cite[Proposition~3.9]{Karpukhin2019}.
\begin{lemma}\label{lem:coclosed}
If $\Delta u=0$ and $i^*\delta u=0$, then $\delta u=0$.
\end{lemma}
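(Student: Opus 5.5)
The plan is to set $f=\delta u$ and show in two steps that $f$ is a Dirichlet harmonic field orthogonal to all Dirichlet harmonic fields, hence zero. Everything is smooth up to the boundary, so Green's formula \eqref{eq:Green} applies freely.

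\emph{Step 1: $f\in\HD^{p-1}(M)$.} Trivially $\delta f=\delta^2u=0$, and $i^*f=0$ by hypothesis. For closedness, $\Delta u=0$ gives
\[
  df=d\delta u=-\delta du.
\]
Apply \eqref{eq:Green} with $\alpha=df$ and $\beta=du$:
\[
  0=\ip{d(df)}{du}{M}=\ip{df}{\delta du}{M}+\ip{i^*df}{i^*\iota_\nu du}{\Sigma}.
\]
Since $i^*df=d_\Sigma i^*f=0$, the boundary term vanishes. Hence
\[
  \|df\|_{L^2}^2=-\ip{df}{\delta du}{M}=0,
\]
so $df=0$. Together with $\delta f=0$ and $i^*f=0$, this places $f$ in $\HD^{p-1}(M)$.

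\emph{Step 2: $f\perp\HD^{p-1}(M)$.} For any $h\in\HD^{p-1}(M)$, apply \eqref{eq:Green} with $\alpha=h$ and $\beta=u$:
\[
  \ip{dh}{u}{M}=\ip{h}{\delta u}{M}+\ip{i^*h}{i^*\iota_\nu u}{\Sigma}.
\]
The left side vanishes because $dh=0$, and the boundary term vanishes because $i^*h=0$. Thus $\ip{h}{f}{M}=0$ for every such $h$.

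\emph{Conclusion.} Taking $h=f$, which is allowed by Step 1, gives $\|f\|_{L^2}^2=0$, so $\delta u=0$.

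The only point requiring care is the closedness in Step 1. There one must use Green's formula with the exact form $df$ rather than $f$ itself, so that the boundary term involves the tangential trace $i^*df=d_\Sigma(i^*f)$, which the hypothesis $i^*f=0$ kills. Step 2 and the conclusion are routine.
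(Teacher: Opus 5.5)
Your proof is correct. You apply \eqref{eq:Green} first to the pair $(d\delta u,du)$ to get $d\delta u=0$, using $i^*d\delta u=d_\Sigma i^*\delta u=0$, and then to $(\delta u,u)$ to get $\|\delta u\|_{L^2}^2=\ip{d\delta u}{u}{M}=0$. This is exactly the Green's-formula consequence the paper invokes; the paper itself gives no argument and defers the proof to Karpukhin's Proposition~3.9. The detour through $\delta f=0$, membership in $\HD^{p-1}(M)$, and orthogonality to all of $\HD^{p-1}(M)$ is harmless but not needed, since only the case $h=\delta u$ is used.
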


\subsection{The boundary energy}

The solutions of \eqref{eq:extension} form an affine space $\mathcal{L}(\varphi)$
with difference space $\HD^p(M)$. Orthogonality to $\HD^p(M)$ selects
the canonical extension $\lambda_g(\varphi)$
\cite[Proposition~3.11]{Karpukhin2019}. In particular,
$d\lambda_g(\varphi)$ is independent of the normalization.
For smooth boundary data define
\begin{equation}\label{eq:boundary-energy}
  q_g(\varphi,\psi)
  =\ip{d\lambda_g(\varphi)}{d\lambda_g(\psi)}{M}
  =\ip{\Lambda_g\varphi}{\psi}{\Sigma},\qquad
  q_g(\varphi)=q_g(\varphi,\varphi).
\end{equation}
The second identity follows from \eqref{eq:Green}. In particular,
$\Lambda_g$ is formally self-adjoint and nonnegative.
If $\eta$ is any smooth extension of $\varphi$ and
$\zeta=\eta-\lambda_g(\varphi)$, then $i^*\zeta=0$ and
$\delta d\lambda_g(\varphi)=0$, so
\[
  \ip{d\zeta}{d\lambda_g(\varphi)}{M}=0.
\]
Consequently,
\begin{equation}\label{eq:energy-minimum}
\begin{gathered}
  \|d\eta\|_{L^2(M)}^2=q_g(\varphi)+\|d\zeta\|_{L^2(M)}^2,\\
  q_g(\varphi)
  =\min_{\substack{\eta\in\Omega^p(M)\\i^*\eta=\varphi}}
       \int_M|d\eta|_g^2\,dV_g.
\end{gathered}
\end{equation}
There is no interior coclosedness condition on the trial extension
$\eta$ \cite[Proposition~5.1]{Karpukhin2019}.

We use Karpukhin's max--min characterization
\cite[Theorem~5.2]{Karpukhin2019}:
\begin{equation}\label{eq:variational}
  \sigma_k^{(p)}(M,g)
  =\max_{\substack{F\subset\cC^p(\Sigma)\\\dim F=k-1}}
    \ \min_{\substack{0\ne\varphi\in\cC^p(\Sigma)\\\varphi\perp F}}
       \frac{q_g(\varphi)}{\|\varphi\|_{L^2(\Sigma)}^2}.
\end{equation}
It is enough to take smooth finite-dimensional spaces. The energy form
extends to the closed form on $H^{1/2}\Omega^p(\Sigma)\cap\ker\delta_\Sigma$.
The kernel of the coclosed realization has dimension $I_p(M)$.
Orthogonality throughout is in the boundary $L^2$ inner product.

\subsection{The boundary Hodge projection}

On the closed manifold $\Sigma$, let $H_p$ be the orthogonal projection
onto harmonic $p$-forms and let $G_p$ be the Hodge Green operator.
The Hodge decomposition and Green-operator identities
\cite[Theorem~6.8, Definition~6.9, and Proposition~6.10]{Warner1983} give
\begin{equation}\label{eq:Green-Hodge}
\begin{gathered}
  \Delta_\Sigma G_p=G_p\Delta_\Sigma=\Id-H_p,\qquad
  d_\Sigma G_p=G_{p+1}d_\Sigma,\qquad
  \delta_\Sigma G_p=G_{p-1}\delta_\Sigma.
\end{gathered}
\end{equation}
The changes in degree in the commutation identities are understood.
The Hodge decomposition gives
\begin{equation}\label{eq:projection}
  \Pi_p=H_p+\delta_\Sigma d_\Sigma G_p
       =\Id-d_\Sigma\delta_\Sigma G_p.
\end{equation}
It is the $L^2$-orthogonal projection onto $\ker\delta_\Sigma$. In particular, for any $\alpha \in \Omega^p(\Sigma)$,
\begin{equation}\label{eq:projection-properties}
  d_\Sigma\Pi_p\alpha=d_\Sigma\alpha,\qquad
  \Pi_pd_\Sigma\alpha=0,\qquad
  \|\Pi_p\alpha\|_{L^2(\Sigma)}\leq\|\alpha\|_{L^2(\Sigma)}.
\end{equation}
All these identities hold componentwise if $\Sigma$ is disconnected.
At degree zero, terms involving negative-degree forms are zero and
$\Pi_0=\Id$.

\begin{lemma}\label{lem:compression}
For $0\leq p\leq n-2$,
\[
  \Lambda_g\Pi_p=\Pi_p\Lambda_g=\Lambda_g.
\]
\end{lemma}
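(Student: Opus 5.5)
Both identities reduce to two facts about $\Lambda_g$: it annihilates exact boundary forms, and its range is coclosed. Since $\Pi_p=\Id-d_\Sigma\delta_\Sigma G_p$ by \eqref{eq:projection}, we have $\Id-\Pi_p=d_\Sigma\delta_\Sigma G_p$, whose range consists of exact forms. So $\Lambda_g\Pi_p=\Lambda_g$ follows once we show $\Lambda_g d_\Sigma\beta=0$ for every $\beta\in\Omega^{p-1}(\Sigma)$. Likewise, $\Pi_p\Lambda_g=\Lambda_g$ amounts to $\delta_\Sigma\Lambda_g\varphi=0$ for every $\varphi$, because $\Pi_p$ is the identity on $\ker\delta_\Sigma$. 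For $p=0$ the first identity is trivial, since $\Pi_0=\Id$.

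\emph{Step 1: exact data.} Let $\beta\in\Omega^{p-1}(\Sigma)$. Take any smooth extension $\tilde\beta$ of $\beta$ to $M$; then $d\tilde\beta$ extends $d_\Sigma\beta$. By the minimization \eqref{eq:energy-minimum},
\[
q_g(d_\Sigma\beta)\le\|dd\tilde\beta\|^2_{L^2(M)}=0,
\]
so $d\lambda_g(d_\Sigma\beta)=0$. Then $\Lambda_g d_\Sigma\beta=i^*\iota_\nu d\lambda_g(d_\Sigma\beta)=0$, using that \eqref{eq:Lambda} does not depend on the choice of extension. One can also argue directly: $q_g(d_\Sigma\beta,\psi)=0$ for all $\psi$ by Cauchy--Schwarz, so $\Lambda_g d_\Sigma\beta=0$ by \eqref{eq:boundary-energy}.

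\emph{Step 2: coclosed range.} Because $\Lambda_g$ is formally self-adjoint by \eqref{eq:boundary-energy}, Step 1 gives, for every $\beta$,
\[
\ip{\delta_\Sigma\Lambda_g\varphi}{\beta}{\Sigma}=\ip{\Lambda_g\varphi}{d_\Sigma\beta}{\Sigma}=\ip{\varphi}{\Lambda_g d_\Sigma\beta}{\Sigma}=0.
\]
Hence $\delta_\Sigma\Lambda_g\varphi=0$, which gives $\Pi_p\Lambda_g=\Lambda_g$.

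A direct alternative for Step 2 uses the coclosed extension $\omega$. From $\delta\omega=0$ and $\Delta\omega=0$ we get $\delta d\omega=0$. The identity $i^*\iota_\nu\delta\eta=-\delta_\Sigma i^*\iota_\nu\eta$ for $\eta=d\omega$, up to sign conventions, then yields $\delta_\Sigma\Lambda_g\varphi=0$. The duality argument avoids these sign conventions, so I would use it.

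The only real point to check is that $q_g(\cdot,\cdot)$ in \eqref{eq:boundary-energy} is defined and symmetric for all smooth $p$-form data, not only coclosed data. The construction of $\lambda_g(\varphi)$ cited from Karpukhin applies to arbitrary $\varphi\in\Omega^p(\Sigma)$, and the Green identity step in \eqref{eq:boundary-energy} uses only $\delta d\lambda_g(\varphi)=0$. So this holds, and the restriction $p\le n-2$ plays no role beyond making the definitions meaningful.
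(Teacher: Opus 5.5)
Your proposal is correct and follows the paper's proof: exact boundary data are annihilated because $d\tilde\beta$ is a zero-energy extension, so the energy minimization forces $d\lambda_g(d_\Sigma\beta)=0$, and this yields $\Lambda_g\Pi_p=\Lambda_g$ since $\Id-\Pi_p$ has exact range. The identity $\Pi_p\Lambda_g=\Lambda_g$ then follows, exactly as in the paper, from formal self-adjointness, which makes the image of $\Lambda_g$ orthogonal to exact forms.
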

\begin{proof}
If $p\geq1$ and $\varphi=d_\Sigma\gamma$, extend $\gamma$ smoothly to
$\widetilde\gamma$ on $M$. The form $d\widetilde\gamma$ extends
$\varphi$ and has zero exterior-derivative energy. Hence
\eqref{eq:energy-minimum} gives $d\lambda_g(\varphi)=0$ and
$\Lambda_g\varphi=0$. Since $\Id-\Pi_p$ projects onto exact forms,
this proves $\Lambda_g\Pi_p=\Lambda_g$. Formal self-adjointness implies
that the image of $\Lambda_g$ is orthogonal to exact forms, proving the
other identity. For $p=0$, the conclusion follows from $\Pi_0=\Id$.
\end{proof}

\subsection{Classical boundary operators}

We use $\Psicl{m}(E,F)$ for classical pseudodifferential operators of
order at most $m$ between smooth vector bundles on the closed boundary.
Composition adds orders and multiplies principal symbols. Elliptic
operators have parametrices, and an invertible classical elliptic operator
has a classical inverse of the opposite order; see \cite{Hormander2007}.

For full Dirichlet data
$(\varphi,\beta)\in\Omega^p(\Sigma)\oplus\Omega^{p-1}(\Sigma)$,
let $H(\varphi,\beta)$ be the unique solution of
\begin{equation}\label{eq:full-Dirichlet}
  \Delta u=0,\qquad i^*u=\varphi,\qquad i^*\iota_\nu u=\beta.
\end{equation}
This is the full Dirichlet problem for a Laplace-type operator:
both tangential and normal components are prescribed; see
\cite[Theorem~3.4.10]{Schwarz1995}. In inward normal coordinates
$(x',s)$, write
\begin{equation}\label{eq:normal-collar}
  g=ds^2+h_s,\qquad \nu=-\partial_s,\qquad \nu^\flat=-ds.
\end{equation}
Write the harmonic form in these coordinates as
\begin{equation}\label{eq:collar-form}
  u=a(s)-ds\wedge b(s),\qquad a(0)=\varphi,\qquad b(0)=\beta.
\end{equation}
Here $a(s)$ and $b(s)$ are tangential forms identified by the collar
coordinates. The componentwise outward-normal boundary map is
\begin{equation}\label{eq:full-DN}
  \mathcal N(\varphi,\beta)
  =\bigl(-\partial_sa|_{s=0},-\partial_sb|_{s=0}\bigr).
\end{equation}
The full Dirichlet-to-Neumann construction for the Hodge Laplacian gives
\begin{equation}\label{eq:full-DN-symbol}
  \mathcal N\in\Psicl{1},\qquad
  \sigma_1(\mathcal N)(x,\xi)=|\xi|_{g_{\Sigma}}\Id;
\end{equation}
see \cite[Theorem~1.1(i) and Proposition~3.1]{JoshiLionheart2005}.
The principal symbol uses the outward normal; in the frozen half-space
model the decaying solution is $e^{-s|\xi|}$, whose outward derivative
is $|\xi|$ at $s=0$.

\begin{remark}\label{rem:top-degree}
We restrict the infinite-dimensional spectral statements to
$0\leq p\leq n-2$. For completeness, under
\eqref{eq:extension}--\eqref{eq:Lambda}, the top boundary degree has the
explicit finite-rank formula
\begin{equation}\label{eq:top-degree}
  \Lambda_g\varphi
  =\frac{\displaystyle\int_\Sigma\varphi}{\Vol_n(M,g)}\,dA_{g_{\Sigma}},
  \qquad \varphi\in\Omega^{n-1}(\Sigma).
\end{equation}
Indeed, $d\omega=f\,dV_g$ and $\delta d\omega=0$ imply that $f$ is
constant on the connected manifold. Stokes' theorem determines $f$, and
normal contraction gives \eqref{eq:top-degree}. Thus this operator is
smoothing and has rank one. Since $dA_{g_{\Sigma}}$ is coclosed, restriction to
$\cC^{n-1}(\Sigma)$ does not make it zero. This degree does not enter the
Hersch--Payne--Schiffer comparison considered here.
\end{remark}

\section{Pseudodifferential structure}\label{sec:pdo}

We first prove Theorem~\ref{thm:pdo} for $1\leq p\leq n-2$.
Throughout this section, the metric is fixed, and we suppress the
subscript $g$ on $\Lambda$.

\subsection{Boundary operators and the coclosedness constraint}

Decompose the full boundary map in \eqref{eq:full-DN} into blocks:
\[
  \mathcal N=
  \begin{pmatrix}
    \mathcal N_{11}&\mathcal N_{12}\\
    \mathcal N_{21}&\mathcal N_{22}
  \end{pmatrix}
  \quad\text{on }\Omega^p(\Sigma)\oplus\Omega^{p-1}(\Sigma).
\]
The scalar principal symbol in \eqref{eq:full-DN-symbol} implies that the
diagonal blocks have order one and principal symbol $|\xi|\Id$,
whereas the off-diagonal blocks have order at most zero.

For the collar decomposition \eqref{eq:collar-form}, the trace
identities are
\begin{equation}\label{eq:trace-identities}
\begin{split}
  i^*\delta u&=\delta_\Sigma a(0)+b'(0)+Zb(0),\\
  i^*\iota_\nu du&=-a'(0)-d_\Sigma b(0),
\end{split}
\end{equation}
where $Z$ is a smooth bundle endomorphism on $(p-1)$-forms determined
by the first normal derivative of $h_s$. The second identity follows by
differentiating $a-ds\wedge b$. For the first, use
$\delta=-\sum_j\iota_{e_j}\nabla_{e_j}$ in an adapted orthonormal frame.
The tangential derivatives of $a$ give $\delta_\Sigma a$, the normal
derivative gives $b'$, and the remaining terms arise from the second
fundamental form and are zeroth order in $b$.

Define
\begin{equation}\label{eq:boundary-blocks}
\begin{aligned}
  A\beta&=-i^*\delta H(0,\beta),&
  B\varphi&=i^*\delta H(\varphi,0),\\
  R\varphi&=i^*\iota_\nu dH(\varphi,0),&
  S\beta&=i^*\iota_\nu dH(0,\beta).
\end{aligned}
\end{equation}
Thus $A$ acts on $(p-1)$-forms, $R$ acts on $p$-forms, and $B,S$
change degree by $-1,+1$, respectively. By linearity,
\begin{equation}\label{eq:constraint}
  i^*\delta H(\varphi,\beta)=B\varphi-A\beta,\qquad
  i_ndH(\varphi,\beta)=R\varphi+S\beta.
\end{equation}
By Lemma~\ref{lem:coclosed}, the coclosedness constraint is therefore
the boundary equation $A\beta=B\varphi$.

\begin{lemma}\label{lem:block-symbols}
The operators in \eqref{eq:boundary-blocks} are classical
pseudodifferential operators of order at most one. Their principal symbols
are
\begin{equation}\label{eq:block-symbols}
\begin{aligned}
  \sigma_1(A)&=|\xi|\Id,&
  \sigma_1(B)&=-\ii\iota_{\xi^\sharp},\\
  \sigma_1(R)&=|\xi|\Id,&
  \sigma_1(S)&=-\ii\xi\wedge.
\end{aligned}
\end{equation}
In particular, $A$ is elliptic.
\end{lemma}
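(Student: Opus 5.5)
The plan is to express each of the four operators in \eqref{eq:boundary-blocks} as a block of the full boundary map $\mathcal N$ plus a boundary differential operator of order at most one, using the trace identities \eqref{eq:trace-identities}. Classicality and the principal symbols then follow from \eqref{eq:full-DN-symbol} together with the symbols of $d_\Sigma$ and $\delta_\Sigma$. Throughout we use the convention $D=-\ii\partial$, under which $\sigma_1(d_\Sigma)=\ii\,\xi\wedge$ and $\sigma_1(\delta_\Sigma)=-\ii\,\iota_{\xi^\sharp}$.

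Write $u=H(\varphi,\beta)=a(s)-ds\wedge b(s)$ as in \eqref{eq:collar-form}, with $a(0)=\varphi$ and $b(0)=\beta$. By \eqref{eq:full-DN},
\[
  a'(0)=-(\mathcal N_{11}\varphi+\mathcal N_{12}\beta),\qquad
  b'(0)=-(\mathcal N_{21}\varphi+\mathcal N_{22}\beta).
\]
Substituting into \eqref{eq:trace-identities} and separating the cases $\varphi=0$ and $\beta=0$ gives four formulas.
\begin{align*}
  A\beta&=-\bigl(b'(0)+Z\beta\bigr)=\mathcal N_{22}\beta-Z\beta,\\
  B\varphi&=\delta_\Sigma\varphi-\mathcal N_{21}\varphi,\\
  R\varphi&=-a'(0)=\mathcal N_{11}\varphi,\\
  S\beta&=-a'(0)-d_\Sigma\beta=\mathcal N_{12}\beta-d_\Sigma\beta.
\end{align*}
Each right-hand side is a sum of a block of $\mathcal N\in\Psicl{1}$, a differential operator, and a smooth endomorphism. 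Each is therefore classical of order at most one.

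To read off the principal symbols, note that the blocks of a classical operator with respect to a smooth bundle splitting are classical. Their principal symbols are the corresponding blocks of $\sigma_1(\mathcal N)=|\xi|\Id$. Consequently $\sigma_1(\mathcal N_{11})=\sigma_1(\mathcal N_{22})=|\xi|\Id$, while $\mathcal N_{12}$ and $\mathcal N_{21}$ have vanishing order-one symbol and lie in $\Psicl{0}$. The endomorphism $Z$ is of order zero.

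It follows that $\sigma_1(A)=\sigma_1(R)=|\xi|\Id$. Likewise $\sigma_1(B)=\sigma_1(\delta_\Sigma)=-\ii\iota_{\xi^\sharp}$ and $\sigma_1(S)=-\sigma_1(d_\Sigma)=-\ii\,\xi\wedge$. Ellipticity of $A$ is immediate, since $|\xi|\Id$ is invertible for $\xi\ne0$.

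The main obstacle is a careful justification of the first trace identity in \eqref{eq:trace-identities}, with correct signs. One must check that, in an adapted frame, $-\sum_j\iota_{e_j}\nabla_{e_j}$ applied to $a-ds\wedge b$ restricts at $s=0$ to $\delta_\Sigma a(0)+b'(0)$ plus a term $Zb(0)$ of order zero. Two points need verification here.
\begin{itemize}
  \item The normal term $-\iota_{\partial_s}\nabla_{\partial_s}(-ds\wedge b)$ produces $+b'$. This uses $\nabla_{\partial_s}ds=0$ along the geodesic normal collar.
  \item The tangential covariant derivatives of $a$ differ from those of the induced connection only through the second fundamental form. When contracted and pulled back by $i^*$, these corrections land in the normal component and do not contribute to the tangential part. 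The only surviving second-fundamental-form contributions act on $b$, and they are of order zero.
\end{itemize}
A sign error at this step would flip $\sigma_1(A)$ and destroy its ellipticity. I would therefore verify the identity in the flat half-space model, where $Z=0$ and $\delta$ is explicit, and then add the curvature corrections.
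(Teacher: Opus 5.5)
Your proposal is correct and matches the paper's proof: you derive the same identities $A=\mathcal N_{22}-Z$, $B=\delta_\Sigma-\mathcal N_{21}$, $R=\mathcal N_{11}$, $S=\mathcal N_{12}-d_\Sigma$ from \eqref{eq:trace-identities}, and you read off the symbols from \eqref{eq:full-DN-symbol}, the order-zero off-diagonal blocks, and the symbols of $d_\Sigma$ and $\delta_\Sigma$. Your extra check of the sign of $b'(0)$, which the paper only sketches via $\delta=-\sum_j\iota_{e_j}\nabla_{e_j}$, is consistent with that sketch; note also that the difference between $\nabla_{\partial_s}b$ and the collar derivative $b'$ is zeroth order and is absorbed into $Z$.
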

\begin{proof}
Equations~\eqref{eq:trace-identities} and~\eqref{eq:boundary-blocks} give
\[
  A=\mathcal N_{22}-Z,\quad
  B=\delta_\Sigma-\mathcal N_{21},\quad
  R=\mathcal N_{11},\quad
  S=\mathcal N_{12}-d_\Sigma.
\]
Use \eqref{eq:full-DN-symbol}, the order-zero property of the
off-diagonal blocks, and the symbol conventions
$\sigma_1(d_\Sigma)=\ii\xi\wedge$ and
$\sigma_1(\delta_\Sigma)=-\ii\iota_{\xi^\sharp}$.
\end{proof}

\begin{lemma}\label{lem:A-kernel}
The operator $A$ is nonnegative and formally self-adjoint. Its kernel
$K$ is a finite-dimensional space of smooth forms. More precisely,
\begin{equation}\label{eq:A-kernel}
  K=\{i^*\iota_\nu h:h\in\HD^p(M)\},
\end{equation}
and $H(0,\kappa)\in\HD^p(M)$ for every $\kappa\in K$.
\end{lemma}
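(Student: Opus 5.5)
The plan is to use Green's formula \eqref{eq:Green-energy} to represent $A$ as a quadratic form. Fix $\beta,\beta'\in\Omega^{p-1}(\Sigma)$ and put $u=H(0,\beta)$, $v=H(0,\beta')$. Since $\Delta u=0$ and $i^*v=0$, the term $\ip{i^*\iota_\nu du}{i^*v}{\Sigma}$ vanishes. Identity \eqref{eq:Green-energy} then gives
\[
  \ip{du}{dv}{M}+\ip{\delta u}{\delta v}{M}
  =-\ip{i^*\delta u}{i^*\iota_\nu v}{\Sigma}
  =\ip{A\beta}{\beta'}{\Sigma}.
\]
Here we use $A\beta=-i^*\delta u$ and $i^*\iota_\nu v=\beta'$. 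The left side is symmetric in $(\beta,\beta')$, so $A$ is formally self-adjoint. Taking $\beta=\beta'$, the left side is a sum of squared norms, so $A$ is nonnegative.

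For the kernel, suppose $A\beta=0$. The identity with $\beta'=\beta$ forces $du=0$ and $\delta u=0$. Also $i^*u=0$ by construction, so $u\in\HD^p(M)$ and $\beta=i^*\iota_\nu u$. This proves $K\subset\{i^*\iota_\nu h:h\in\HD^p(M)\}$ and $H(0,\kappa)\in\HD^p(M)$ for $\kappa\in K$.

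For the reverse inclusion, take $h\in\HD^p(M)$. Then $\Delta h=0$, $i^*h=0$, and we set $\kappa=i^*\iota_\nu h$. By uniqueness of the full Dirichlet problem \eqref{eq:full-Dirichlet}, $h=H(0,\kappa)$. Hence $A\kappa=-i^*\delta h=0$, giving equality \eqref{eq:A-kernel}.

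Finite dimensionality then follows because $K$ is the image of the finite-dimensional space $\HD^p(M)\cong H^p(M,\partial M;\R)$ under a linear map. Smoothness follows because elements of $\HD^p(M)$ are smooth up to the boundary. Alternatively, $A$ is elliptic by Lemma~\ref{lem:block-symbols}, so $\ker A$ consists of smooth forms and is finite-dimensional on the closed manifold $\Sigma$.

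The main point requiring care is the sign bookkeeping in \eqref{eq:Green-energy} together with the convention $A\beta=-i^*\delta H(0,\beta)$. The boundary term enters as $-\ip{i^*\delta u}{i^*\iota_\nu v}{\Sigma}$, and $i^*\iota_\nu v=\beta'$ matches the collar convention $u=a-ds\wedge b$ with $b(0)=\beta$. I would check this on the half-space model, where the symbol of $A$ should come out as $+|\xi|$. Everything else is direct.
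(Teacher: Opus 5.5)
Your proposal is correct and follows the paper's proof: applying \eqref{eq:Green-energy} to $H(0,\beta)$ and $H(0,\gamma)$ gives the energy identity for $\ip{A\beta}{\gamma}{\Sigma}$. That identity yields symmetry, nonnegativity and $H(0,\kappa)\in\HD^p(M)$ for $\kappa\in K$, while uniqueness of the full Dirichlet problem gives the reverse inclusion. The only minor difference is that you get finite dimensionality and smoothness of $K$ from its identification with the image of $\HD^p(M)$, whereas the paper uses ellipticity of $A$ directly; you mention this too, and it is what controls the kernel on Sobolev spaces when $A+P_K$ is inverted later.
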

\begin{proof}
For $\beta,\gamma\in\Omega^{p-1}(\Sigma)$, apply
\eqref{eq:Green-energy} to $u=H(0,\beta)$ and $v=H(0,\gamma)$.
Since $\Delta u=0$ and $i^*v=0$, we obtain
\begin{equation}\label{eq:A-energy}
  \ip{A\beta}{\gamma}{\Sigma}
  =\ip{du}{dv}{M}+\ip{\delta u}{\delta v}{M}.
\end{equation}
This proves symmetry and nonnegativity. Ellipticity on the closed boundary
gives a finite-dimensional smooth kernel. If $\kappa\in K$, then
\eqref{eq:A-energy} with $\beta=\gamma=\kappa$ gives
\[
  dH(0,\kappa)=0,\qquad \delta H(0,\kappa)=0.
\]
The tangential trace is zero, so $H(0,\kappa)\in\HD^p(M)$.
Conversely, if $h\in\HD^p(M)$ and $\kappa=\iota_\nu h$, uniqueness of the full
Dirichlet problem gives $H(0,\kappa)=h$, and hence $A\kappa=0$.
\end{proof}

Let $P_K$ be the $L^2$-orthogonal projection onto $K$. It is smoothing
because $K$ has a finite smooth basis. The operator $A+P_K$ is elliptic,
self-adjoint, and invertible. Indeed, its restriction to $K$ is the
identity, while $A$ is positive and invertible on $K^\perp$.
Equivalently, injectivity follows by applying $P_K$ to $(A+P_K)\beta=0$,
and surjectivity follows from the self-adjoint elliptic Fredholm
alternative. Thus
\begin{equation}\label{eq:A-inverse}
  G_A=(A+P_K)^{-1}\in\Psicl{-1},\qquad
  \sigma_{-1}(G_A)=|\xi|^{-1}\Id.
\end{equation}
The inverse is a bounded map $H^s\to H^{s+1}$ for every $s$.
A classical parametrix differs from this inverse by a smoothing operator,
by elliptic regularity. The exact identities are
\begin{equation}\label{eq:A-inverse-identities}
  AG_A=G_AA=\Id-P_K,\qquad G_AP_K=P_KG_A=P_K.
\end{equation}

\subsection{Compatibility and the exact Schur complement}

\begin{proposition}\label{prop:Schur}
The compatibility and adjoint identities
\begin{equation}\label{eq:compatibility}
  P_KB=0,\qquad S=-B^*
\end{equation}
hold on smooth boundary forms. Moreover,
\begin{equation}\label{eq:Schur}
  \Lambda=R+SG_AB=R-B^*G_AB.
\end{equation}
\end{proposition}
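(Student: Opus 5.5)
The plan is to derive all three identities from Green's formula \eqref{eq:Green-energy} applied to pairs of full Dirichlet solutions, together with Lemma~\ref{lem:A-kernel} and Lemma~\ref{lem:coclosed}.

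\emph{Adjoint identity $S=-B^*$.} Take $\varphi\in\Omega^p(\Sigma)$ and $\beta\in\Omega^{p-1}(\Sigma)$, and set $u=H(\varphi,0)$, $v=H(0,\beta)$. Both are harmonic, $i^*v=0$ and $i^*\iota_\nu u=0$. Apply \eqref{eq:Green-energy} with the roles $(u,v)$ and then $(v,u)$. The left sides coincide, being $\ip{du}{dv}{M}+\ip{\delta u}{\delta v}{M}$ in both cases. On the right, the first application gives
\[
\ip{i^*\iota_\nu du}{0}{\Sigma}-\ip{i^*\delta u}{\beta}{\Sigma}=-\ip{B\varphi}{\beta}{\Sigma},
\]
and the second gives $\ip{S\beta}{\varphi}{\Sigma}-0$. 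Equating the two yields $\ip{S\beta}{\varphi}{\Sigma}=-\ip{B\varphi}{\beta}{\Sigma}$, that is, $S=-B^*$.

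\emph{Compatibility $P_KB=0$.} Let $\kappa\in K$. By Lemma~\ref{lem:A-kernel}, $h=H(0,\kappa)\in\HD^p(M)$, so $dh=0$ and $\delta h=0$. Then the adjoint identity gives
\[
\ip{B\varphi}{\kappa}{\Sigma}=-\ip{\varphi}{S\kappa}{\Sigma},
\]
and $S\kappa=i^*\iota_\nu dh=0$. Hence $B\varphi\perp K$ for every $\varphi$, which is $P_KB=0$.

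\emph{Schur formula \eqref{eq:Schur}.} Given $\varphi$, set $\beta=G_AB\varphi$. By \eqref{eq:A-inverse-identities} and compatibility,
\[
A\beta=(\Id-P_K)B\varphi=B\varphi .
\]
So $\omega=H(\varphi,\beta)$ is harmonic with $i^*\delta\omega=B\varphi-A\beta=0$, and Lemma~\ref{lem:coclosed} gives $\delta\omega=0$. Thus $\omega$ solves \eqref{eq:extension}, and by \eqref{eq:constraint}
\[
\Lambda\varphi=i^*\iota_\nu d\omega=R\varphi+SG_AB\varphi=R\varphi-B^*G_AB\varphi .
\]
This is legitimate because $\Lambda$ is independent of the chosen extension. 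The other choices of $\beta$ differ by elements of $K$, and these change $\omega$ by an element of $\HD^p$, which does not affect $d\omega$.

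The main obstacle is getting the signs right in the trace conventions. One must check that $i^*\iota_\nu u=\beta$ matches the term $\ip{i^*\delta u}{i^*\iota_\nu v}{\Sigma}$ in \eqref{eq:Green-energy}. One must also check that smoothness of $\beta$ holds, which follows from $G_A\in\Psicl{-1}$.
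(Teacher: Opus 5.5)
Your proposal is correct and follows essentially the paper's argument. You apply \eqref{eq:Green-energy} in both orders to $H(\varphi,0)$ and $H(0,\beta)$ to get $S=-B^*$, use Lemma~\ref{lem:A-kernel} for $P_KB=0$, and set $\beta=G_AB\varphi$ with Lemma~\ref{lem:coclosed} to get \eqref{eq:Schur}. The one cosmetic difference is that you deduce $P_KB=0$ from the adjoint identity and $S\kappa=i^*\iota_\nu dh=0$, whereas the paper applies \eqref{eq:Green-energy} directly to $H(\varphi,0)$ and $h$; this is the same computation.
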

\begin{proof}
Let $\kappa\in K$, set $h=H(0,\kappa)$, and take $v=H(\varphi,0)$.
By Lemma~\ref{lem:A-kernel}, $dh=\delta h=0$.
Formula~\eqref{eq:Green-energy}, applied to $v,h$, gives
\[
  0=\ip{dv}{dh}{M}+\ip{\delta v}{\delta h}{M}
   =-\ip{B\varphi}{\kappa}{\Sigma}.
\]
Thus $B\varphi\perp K$.

Next let $u=H(0,\beta)$ and $v=H(\varphi,0)$. Applying
\eqref{eq:Green-energy} in the two orders gives
\[
  \ip{S\beta}{\varphi}{\Sigma}
  =\ip{du}{dv}{M}+\ip{\delta u}{\delta v}{M}
  =-\ip{\beta}{B\varphi}{\Sigma},
\]
which proves $S=-B^*$.

For any $\varphi$, put $\beta=G_AB\varphi$.
Equations~\eqref{eq:A-inverse-identities} and~\eqref{eq:compatibility} imply
\[
  A\beta=B\varphi,\qquad P_K\beta=0.
\]
Thus $\beta$ is the unique solution of the constraint in $K^\perp$.
The harmonic form $\widehat\varphi=H(\varphi,\beta)$ satisfies
\[
  i^*\widehat\varphi=\varphi,\qquad i^*\delta\widehat\varphi=0.
\]
By Lemma~\ref{lem:coclosed}, $\delta\widehat\varphi=0$.
It is therefore an admissible extension in the definition of $\Lambda$.
The second identity in \eqref{eq:constraint} gives \eqref{eq:Schur}.
\end{proof}

The finite-dimensional correction in \eqref{eq:A-inverse} is essential
to the global formula. A principal-symbol inversion alone would not
justify solving $A\beta=B\varphi$ in the presence of relative harmonic
fields. The compatibility identity removes precisely this obstruction.

\subsection{The principal symbol and the coclosed realization}

\begin{proof}[Proof of Theorem~\ref{thm:pdo}]
For $1\leq p\leq n-2$, Proposition~\ref{prop:Schur} and the composition
calculus give $\Lambda\in\Psicl{1}$.
By \eqref{eq:block-symbols} and \eqref{eq:A-inverse},
\begin{equation}\label{eq:Lambda-symbol}
\begin{split}
  \sigma_1(\Lambda)(x,\xi)
  &=|\xi|\Id+(-\ii\xi\wedge)|\xi|^{-1}(-\ii\iota_{\xi^\sharp})\\
  &=|\xi|\Id-|\xi|^{-1}\xi\wedge\iota_{\xi^\sharp}
   =|\xi|\pi_p(x,\xi).
\end{split}
\end{equation}
To identify the projection, use
\[
  G_p=(\Delta_\Sigma+H_p)^{-1}-H_p.
\]
The elliptic inverse construction gives $G_p\in\Psicl{-2}$, with
principal symbol $|\xi|^{-2}\Id$. Thus \eqref{eq:projection} implies
\[
  \Pi_p\in\Psicl{0},\qquad
  \sigma_0(\Pi_p)=\Id-|\xi|^{-2}\xi\wedge\iota_{\xi^\sharp}=\pi_p.
\]
The identity
\[
  \iota_{\xi^\sharp}(\xi\wedge\alpha)
  +\xi\wedge\iota_{\xi^\sharp}\alpha=|\xi|^2\alpha
\]
shows that $\pi_p$ is the orthogonal projection onto
$\ker\iota_{\xi^\sharp}$. Its rank is $\binom{n-2}{p}$.
The exact compression identity follows from Lemma~\ref{lem:compression}.

Let $\mathcal D_p=(\Id+\Delta_{\Sigma,p})^{1/2}$.
This operator is positive and classical of order one, has principal symbol
$|\xi|\Id$, and commutes with $\Pi_p$; see \cite{Seeley1967}.
Define
\begin{equation}\label{eq:elliptic-completion}
  \mathcal L=\Lambda+(\Id-\Pi_p)\mathcal D_p(\Id-\Pi_p).
\end{equation}
Its principal symbol is
\[
  |\xi|\pi_p+|\xi|(\Id-\pi_p)=|\xi|\Id.
\]
Thus $\mathcal L$ is a symmetric nonnegative elliptic operator of order one
on the full bundle. Choose a two-sided parametrix
$\mathcal Q_0\in\Psicl{-1}$ and set
$\mathcal Q_p=\Pi_p\mathcal Q_0\Pi_p$.
Since $\mathcal L\Pi_p=\Pi_p\mathcal L=\Lambda$, compression of the
parametrix identities gives \eqref{eq:parametrix-main}, with smoothing
remainders.

The self-adjoint realization of $\mathcal L$ has domain
$H^1\Omega^p(\Sigma)$ and compact resolvent, by elliptic regularity and
the compact embedding $H^1\hookrightarrow L^2$.
The bounded projection $\Pi_p$ reduces this realization, since it commutes
with $\mathcal L$ on smooth forms and preserves $H^1$.
Its restriction to $\im\Pi_p$ is the self-adjoint realization of $\Lambda$
with domain $H^1\Omega^p(\Sigma)\cap\ker\delta_\Sigma$.
Its resolvent is compact, being the restriction of a compact resolvent
to a reducing subspace.

For $p=0$, the extension problem is the scalar harmonic Dirichlet problem,
$\Pi_0=\Id$, and $\Lambda$ is the classical elliptic Dirichlet-to-Neumann
operator. The same symbol and realization statements follow directly
from the scalar case of \eqref{eq:full-DN-symbol}.
\end{proof}

\subsection{Proof of Corollary~\ref{cor:Weyl}}

We now provide the proof of the Weyl remainder estimate stated in Corollary~\ref{cor:Weyl}.

\begin{proof}[Proof of Corollary~\ref{cor:Weyl}]
We first recall the corresponding boundary Hodge count.
Let $F_q(\tau)$ count the positive eigenvalues of
$\sqrt{\Delta_{\Sigma,q}}$ below $\tau$, with multiplicity, and let
$C_q(\tau)$ count those on coexact $q$-forms.
The standard Weyl estimate for the Hodge Laplacian on the closed smooth
boundary gives
\[
  F_q(\tau)=\binom {n-1}q c_\Sigma\tau^{n-1}+O(\tau^{n-2}),\qquad
  c_\Sigma=\frac{\omega_{n-1}\Vol_{g_\Sigma} (\Sigma)}{(2\pi)^{n-1}};
\]
see \cite{Hormander1968,Hormander2007}.
For every positive Hodge eigenvalue $\mu$, $d_\Sigma$ identifies its
coexact $(q-1)$-form eigenspace with its exact $q$-form eigenspace.
Its inverse on that eigenspace is $\mu^{-1}\delta_\Sigma$.
The Hodge decomposition therefore gives
\[
  F_q=C_q+C_{q-1},\qquad C_{-1}=0.
\]
Induction and the alternating-binomial identity imply
\begin{equation}\label{eq:coexact-count}
  C_p(\tau)=\sum_{q=0}^p(-1)^{p-q}F_q(\tau)
           =\binom{n-2}{p}c_\Sigma\tau^{n-1}+O(\tau^{n-2}).
\end{equation}
Adding the finite-dimensional harmonic space does not change this estimate.

The operators $\Lambda$ and
$\Pi_p(\Id+\Delta_{\Sigma,p})^{1/2}\Pi_p$ have the same order-one
principal symbol. Their difference is therefore a bounded self-adjoint
operator of order zero. On the coclosed space, the second operator has
eigenvalues $\sqrt{1+\lambda_j^{\prime(p)}}$.
The variational principle yields a constant $C$ such that
\[
  \left|\sigma_j^{(p)}-\sqrt{1+\lambda_j^{\prime(p)}}\right|
  \leq C\qquad(j\geq1).
\]
The count for $(\Id+\Delta_{\Sigma,p})^{1/2}$ has the same leading term and
remainder as \eqref{eq:coexact-count}. Comparing its counts at $\tau-C$
and $\tau+C$ proves \eqref{eq:Weyl}.
\end{proof}

Girouard, Karpukhin, Levitin, and Polterovich
\cite[Theorems~5.6 and~5.8]{GKLP2022} established the bounded eigenvalue
comparison and the leading Weyl term by a different method.
The proof above uses the classical symbol to recover the comparison and
the $O(\tau^{n-2})$ remainder; the latter was explicitly requested in
\cite[Open Question~7.23]{CGGS2024}.

\section{Conformal non-rigidity}\label{sec:conformal}

The relevant invariance holds for every critical-dimensional filling.

\begin{proposition}\label{prop:conformal}
Let $n=2p+2$ and $\widetilde g=e^{2u}g$ on $M$, where
$u\in C^\infty(M)$ and $u|_\Sigma=0$.
Then, under the identity identification of boundary forms,
\begin{equation}\label{eq:operator-invariance}
  \Lambda_{\widetilde g}=\Lambda_g.
\end{equation}
\end{proposition}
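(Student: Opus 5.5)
The plan is to work entirely through the variational description \eqref{eq:energy-minimum} of the boundary energy and then recover the operator by polarization.

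\textbf{Step 1: conformal invariance of the integrand.} For a $(p+1)$-form $\alpha$ and $\widetilde g=e^{2u}g$ we have
\[
|\alpha|_{\widetilde g}^2=e^{-2(p+1)u}|\alpha|_g^2,\qquad dV_{\widetilde g}=e^{nu}dV_g .
\]
When $n=2p+2$ these weights cancel, so $\int_M|d\eta|_{\widetilde g}^2\,dV_{\widetilde g}=\int_M|d\eta|_g^2\,dV_g$ for every $\eta\in\Omega^p(M)$.

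\textbf{Step 2: the boundary energies agree.} The admissible class $\{\eta: i^*\eta=\varphi\}$ in \eqref{eq:energy-minimum} does not involve the metric. The crucial point is that no interior coclosedness condition is imposed on trial extensions, and this is exactly why the varying codifferential causes no trouble. Taking the minimum over this common class therefore gives $q_{\widetilde g}(\varphi)=q_g(\varphi)$ for all smooth $\varphi$. Polarization then yields $q_{\widetilde g}(\varphi,\psi)=q_g(\varphi,\psi)$.

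\textbf{Step 3: from energies to operators.} Since $u|_\Sigma=0$, the induced boundary metrics coincide: $\widetilde g_\Sigma=g_\Sigma$. Hence the boundary $L^2$ pairing is the same for both metrics. Formula \eqref{eq:boundary-energy} now gives
\[
\ip{\Lambda_{\widetilde g}\varphi}{\psi}{\Sigma}=\ip{\Lambda_g\varphi}{\psi}{\Sigma}
\]
for all smooth $\psi$, and this forces $\Lambda_{\widetilde g}\varphi=\Lambda_g\varphi$.

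\textbf{Step 4: the remaining subtlety.} The minimizer itself, namely the canonical extension $\lambda_{\widetilde g}(\varphi)$, generally changes with the metric. Only the minimum value is invariant, and this suffices because $q_g$ depends on the extension only through its minimal value. The second identity in \eqref{eq:boundary-energy} requires $\delta_{\widetilde g}d\lambda_{\widetilde g}(\varphi)=0$, and this holds for the new metric because $\lambda_{\widetilde g}(\varphi)$ solves \eqref{eq:extension} with respect to $\widetilde g$. I expect no real obstacle beyond keeping track of which objects are metric-dependent (the extension and $\delta$) and which are not (the minimum, $i^*$, and the boundary pairing).
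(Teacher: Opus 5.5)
Your proposal is correct and is essentially the paper's own argument. The paper makes the same three moves: the weight $e^{(n-2(p+1))u}$ is identically one when $n=2p+2$, the trace-constrained admissible class in \eqref{eq:energy-minimum} is metric-independent, and $u|_\Sigma=0$ fixes the boundary $L^2$ pairing, so polarizing $q_{\widetilde g}=q_g$ yields $\Lambda_{\widetilde g}=\Lambda_g$. Your Step~4 observation, that only the minimum value and not the minimizer needs to be invariant, is a correct clarification that the paper leaves implicit.
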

\begin{proof}
For every $q$-form $\eta$,
\[
  |\eta|_{\widetilde g}^2\,dV_{\widetilde g}
  =e^{(n-2q)u}|\eta|_g^2\,dV_g.
\]
Apply this to $\eta=d\omega$, of degree $p+1=n/2$.
The interior energy in \eqref{eq:energy-minimum} is unchanged for every
trial extension. The trace constraint is metric-independent, while
$u|_\Sigma=0$ fixes the boundary metric and its $L^2$ inner product.
It follows that $q_{\widetilde g}(\varphi)=q_g(\varphi)$ for all smooth
boundary forms. Polarization and \eqref{eq:boundary-energy} yield
\eqref{eq:operator-invariance}.
\end{proof}

Karpukhin's Proposition~2.10 gives the corresponding spectral invariance.
The preceding argument records that the complete boundary operator is
preserved.

\begin{proof}[Proof of Theorem~\ref{thm:nonrigidity}]
Apply Proposition~\ref{prop:conformal} to the Euclidean ball $(\mathbb{B}^n,g_E)$.
The induced boundary metric, and hence the entire boundary Hodge spectrum,
is unchanged. For $p\geq1$, the ball computation in
\cite[Theorem~8.1]{Karpukhin2019} gives
\begin{equation}\label{eq:ball-spectrum}
\begin{aligned}
  \sigma_1^{(p)}(\mathbb{B}^{2p+2})=\cdots=\sigma_{D_p}^{(p)}(\mathbb{B}^{2p+2})&=p+1,\\
  \lambda_1^{\prime(p)}(\mathbb{S}^{2p+1})=\cdots=
  \lambda_{D_p}^{\prime(p)}(\mathbb{S}^{2p+1})&=(p+1)^2.
\end{aligned}
\end{equation}
Since $I_p=b_{p+1}=0$ on the ball, these identities give equality in
\eqref{eq:HPS} for every $1\leq m\leq D_p/2$.
For $p=0$, $D_0=2$, $I_0=1$, and $b_1=0$; the assertion is the disk
identity $(\sigma_2^{(0)})^2=\lambda_2^{\prime(0)}=1$.
Thus every conformal change fixing the boundary metric preserves all the
required equalities.

Now take a nonzero $u\in C_c^\infty( \mathbb{B}^n)$ with $u\geq0$.
The metric is Euclidean on an entire boundary collar.
If it were isometric to a Euclidean ball of radius $R$, the boundary
area would give
\[
  |\mathbb{S}^{n-1}|_{\Tilde{g}|_{\mathbb{S}^{n-1}}}=R^{n-1}|\mathbb{S}^{n-1}|_{g_{\mathbb{S}^{n-1}}},
\]
so $R=1$. But
\[
  \Vol_{e^{2u}g_E}(\mathbb{B}^n)=\int_{\mathbb{B}^n}e^{nu}\,dx
                       >\Vol_{g_E}(\mathbb{B}^n),
\]
a contradiction. Finally, for fixed such $u$, the volume
$\int_{\mathbb{B}^n}e^{ntu}\,dx$ is strictly increasing in $t>0$.
The metrics in the stated one-parameter family are therefore pairwise
nonisometric.
\end{proof}

\begin{remark}
These examples have the same boundary metric and all the same boundary
jets as the Euclidean metric. A rigidity statement must therefore
account for conformal changes fixing the boundary metric or impose
additional restrictions on the interior geometry.
No classification modulo this conformal freedom is proved here.
\end{remark}

\section{A shrinking core with Euclidean annuli}\label{sec:construction}

For the remainder of the geometric argument, fix $p\geq1$ and $N\geq1$, We denote the unit round metric on $\mathbb{S}^{n-1}$ by $g_{\mathbb{S}}$.

\subsection{The core and the interface metric}

Choose $N$ pairwise disjoint small closed geodesic balls
$\mathbb{B}_1^n,\ldots,\mathbb{B}_N^n$ in an auxiliary round $\mathbb{S}^n$, and let
\[
  C=\mathbb{S}^n\setminus\bigcup_{j=1}^N\Int \mathbb{B}_j^n,\qquad
  \Gamma_j=\partial \mathbb{B}_j^n.
\]
The core $C$ is compact, connected, and oriented.
Its boundary has $N$ components.
Choose diffeomorphisms $\kappa_j:\mathbb{S}^{n-1}\to\Gamma_j$.
By the collar theorem \cite[Theorem~9.25]{Lee2012}, there are mutually
disjoint inward collars
\[
  \Theta_j:[0,2s_0)\times \mathbb{S}^{n-1}\longrightarrow U_j\subset C,\qquad
  \Theta_j(0,\theta)=\kappa_j(\theta),\qquad 0<s_0<\tfrac14.
\]
Choose a smooth metric $g_C$ such that, on each smaller collar,
\begin{equation}\label{eq:core-collar}
  \Theta_j^*g_C=ds^2+(1-s)^2g_{\mathbb{S}},\qquad 0\leq s\leq s_0.
\end{equation}
To construct it, start with any smooth background metric.
In each disjoint collar interpolate by a smooth cutoff between the
background tensor and
$(\Theta_j^{-1})^*(ds^2+(1-s)^2g_{\mathbb{S}})$, taking the cutoff to be one on
$[0,s_0]$ and zero near $2s_0$.
A convex combination of positive-definite tensors is positive definite.
The cutoff is zero near the outer edge of the chart, so the resulting
metric is globally smooth.
This prescribes $\kappa_j^*(g_C|_{T\Gamma_j})=g_{\mathbb{S}}$; the auxiliary round metric
used to locate the balls need not induce this boundary metric.

For $0<\epsilon<1$, equip the core with $\epsilon^2g_C$ and attach
\[
  A_\epsilon^{(j)}=[\epsilon,1]\times \mathbb{S}^{n-1},\qquad
  g_A=dr^2+r^2g_{\mathbb{S}},
\]
by identifying $(\epsilon,\theta)$ with $\kappa_j(\theta)$.
Call the resulting space and metric $(M_\epsilon,g_\epsilon)$.

\begin{lemma}\label{lem:geometry}
For every $0<\epsilon<1$, $(M_\epsilon,g_\epsilon)$ is a smooth compact
connected oriented Riemannian manifold.
Its boundary is isometric to the disjoint union of $N$ unit round
$(n-1)$-spheres.
\end{lemma}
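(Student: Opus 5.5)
The plan is to check smoothness of the glued metric directly across each interface $\Gamma_j$, where the scaled core collar and the Euclidean annulus have to match to infinite order. On the core side, \eqref{eq:core-collar} gives
\[
  \Theta_j^*(\epsilon^2g_C)=\epsilon^2ds^2+\epsilon^2(1-s)^2g_{\mathbb{S}},\qquad 0\leq s\leq s_0.
\]
Introduce the radial variable $r=\epsilon(1-s)$, so that $dr=-\epsilon\,ds$. The collar then becomes
\[
  dr^2+r^2g_{\mathbb{S}},\qquad \epsilon(1-s_0)\leq r\leq\epsilon .
\]
The collar boundary $s=0$ corresponds to $r=\epsilon$. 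Increasing $s$ (moving into the core) corresponds to decreasing $r$, so the core collar continues the annulus radially inward.

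Define the glued space through the chart
\[
  \Phi_j:[\epsilon(1-s_0),1]\times\mathbb{S}^{n-1}\longrightarrow M_\epsilon .
\]
On $r\leq\epsilon$ set $\Phi_j(r,\theta)=\Theta_j(1-r/\epsilon,\theta)$. On $r\geq\epsilon$ let $\Phi_j$ be the identity map into $A_\epsilon^{(j)}$. At $r=\epsilon$ both prescriptions give $\kappa_j(\theta)$, so this is exactly the stated identification $(\epsilon,\theta)\sim\kappa_j(\theta)$. In this chart the pulled-back metric is $dr^2+r^2g_{\mathbb{S}}$ on the whole interval, which is a single smooth tensor. Hence the smooth structures on the two pieces agree with one smooth structure on the union, and the metric is smooth on it.

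This matching is the only step requiring care. Gluing along boundaries generally produces a smooth manifold only up to a choice of collars, and the metric is smooth only when the jets match to all orders. Here the explicit warped form makes the two pieces a single cone metric, which avoids that issue. Away from the interfaces, $M_\epsilon$ is locally either the interior of $C$ or the annulus, both smooth.

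The remaining properties follow quickly.
\begin{enumerate}
  \item \emph{Compactness.} $M_\epsilon$ is a finite union of the compact sets $C$ and $A_\epsilon^{(j)}$.
  \item \emph{Connectedness.} $C$ is connected, being $\mathbb{S}^n$ minus finitely many disjoint small balls with $n\geq4$. Each annulus is connected and meets $C$ along $\Gamma_j$.
  \item \emph{Orientation.} Choose the orientation of $A_\epsilon^{(j)}$, or replace $\kappa_j$ by its composition with a reflection, so that the radial chart is orientation compatible with $C$. A reflection is an isometry of $g_{\mathbb{S}}$, so \eqref{eq:core-collar} is unaffected.
  \item \emph{Boundary.} The interfaces $\Gamma_j$ become interior points. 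What remains as boundary is $\{1\}\times\mathbb{S}^{n-1}$ in each annulus, which carries the induced metric $g_{\mathbb{S}}$. Hence the boundary is isometric to $\coprod_{j=1}^N(\mathbb{S}^{n-1},g_{\mathbb{S}})$.
\end{enumerate}
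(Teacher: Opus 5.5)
Your proposal is correct and is essentially the paper's proof: the paper uses the coordinate $x=r-\epsilon$ on the annulus and $x=-\epsilon s$ on the core collar, which is your substitution $r=\epsilon(1-s)$ shifted by $\epsilon$, so both sides carry the single smooth tensor $dx^2+(\epsilon+x)^2g_{\mathbb{S}}$. Your compactness, connectedness, orientation (orienting each annulus so it extends the orientation of $C$) and boundary arguments also match the paper's.
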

\begin{proof}
At an interface, put
\[
  x=r-\epsilon\quad\text{on the annulus},\qquad
  x=-\epsilon s\quad\text{on the core collar}.
\]
Together with local spherical coordinates, this defines the smooth
gluing charts. On both sides, \eqref{eq:core-collar} gives the same
metric expression
\begin{equation}\label{eq:seam-metric}
  g_\epsilon=dx^2+(\epsilon+x)^2g_{\mathbb{S}}.
\end{equation}
For each fixed $\epsilon$, choose a two-sided neighborhood sufficiently
small that $\epsilon+x>0$ and both sides remain inside the stated
coordinate neighborhoods. Formula~\eqref{eq:seam-metric} is a single
smooth positive-definite tensor on that neighborhood. Thus the metrics
match to every order.

Orient each annulus so that the inner-boundary gluing reverses the
induced boundary orientations. These orientations extend that of $C$.
Every annulus meets the connected core, so $M_\epsilon$ is connected.
Compactness is immediate. Only the outer spheres $r=1$ remain as
boundary, and each carries $h$.
\end{proof}

The diameter of the core tends to zero as $\epsilon\downarrow0$.
No curvature bound uniform in $\epsilon$ is required.
The construction uses the scaled-core and conical-collar geometry of
\cite{AnneTakahashi2015,AnneTakahashi2024}; the outer spherical boundaries
are retained here for the Steklov problem.

\subsection{Cohomology and the first boundary eigenspace}

\begin{lemma}\label{lem:indices-spectrum}
For the preceding family,
\begin{equation}\label{eq:cohomology}
  H^q(M_\epsilon;\R)\cong
  \begin{cases}
    \R,&q=0,\\
    \R^{N-1},&q=n-1,\\
    0,&\text{otherwise}.
  \end{cases}
\end{equation}
In particular, $I_p(M_\epsilon)=b_{p+1}(M_\epsilon)=0$.
Moreover,
\begin{equation}\label{eq:boundary-spectrum}
  \lambda_j^{\prime(p)}(\partial M_\epsilon)=(p+1)^2,\qquad
  1\leq j\leq ND_p.
\end{equation}
\end{lemma}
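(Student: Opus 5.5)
The plan splits into a topological part, giving \eqref{eq:cohomology}, and a spectral part on the round sphere, giving \eqref{eq:boundary-spectrum}.

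\textbf{Cohomology.} Each annulus $[\epsilon,1]\times\mathbb{S}^{n-1}$ deformation retracts onto its inner sphere $\{\epsilon\}\times\mathbb{S}^{n-1}$. That sphere is glued to $\Gamma_j$. Hence $M_\epsilon$ deformation retracts onto the core $C$, and $H^q(M_\epsilon)\cong H^q(C)$. The core is $\mathbb{S}^n$ with $N$ disjoint open balls removed. We compute its cohomology with Mayer--Vietoris, or equivalently with the long exact sequence of the pair $(\mathbb{S}^n,\bigcup_j\overline{\mathbb{B}_j^n})$ together with excision. For $N\geq1$ the core is homotopy equivalent to a wedge of $N-1$ copies of $\mathbb{S}^{n-1}$: removing one ball leaves a contractible disk, and each further puncture adds one $(n-1)$-sphere. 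This gives \eqref{eq:cohomology}.

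Since $n=2p+2$ and $p\geq1$, we have $1\leq p$ and $p+1\leq n-2<n-1$. Therefore $H^p(M_\epsilon)=H^{p+1}(M_\epsilon)=0$. It follows that $b_{p+1}(M_\epsilon)=0$, and $I_p(M_\epsilon)=0$ because its source group vanishes.

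\textbf{Boundary spectrum.} By Lemma~\ref{lem:geometry}, $\partial M_\epsilon$ is isometric to $N$ disjoint unit round spheres $\mathbb{S}^{2p+1}$. The coclosed Hodge spectrum of a disjoint union is the union, with multiplicities, of the spectra of the components. It therefore suffices to know the coclosed $p$-form spectrum of $\mathbb{S}^{2p+1}$. Since $p$ lies strictly between $0$ and $2p+1$, the sphere has no harmonic $p$-forms. Hence, with the paper's convention of counting zeros, there are no zero eigenvalues on coclosed forms, and the coclosed spectrum equals the coexact spectrum.

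We then invoke the ball computation \eqref{eq:ball-spectrum}, taken from \cite[Theorem~8.1]{Karpukhin2019}: the first $D_p$ coclosed eigenvalues of $\mathbb{S}^{2p+1}$ equal $(p+1)^2$. We also need that the next eigenvalue is strictly larger. For this we use the standard spectrum of coexact $p$-forms on $\mathbb{S}^{m}$ (Ikeda--Taniguchi, or Gallot--Meyer), namely $(k+p+1)(k+m-p-1)$ for $k\geq0$. With $m=2p+1$ this becomes $(k+p+1)^2$. The bottom value $(p+1)^2$ has multiplicity exactly $D_p=\binom{2p+2}{p+1}$: the eigenspace consists of restrictions to the sphere of the $\iota_{\partial_r}$-contractions of constant $(p+1)$-forms on $\mathbb{R}^{2p+2}$, which is consistent with \eqref{eq:ball-spectrum}. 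All remaining eigenvalues are at least $(p+2)^2$.

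On $N$ spheres the value $(p+1)^2$ therefore has multiplicity $ND_p$ and is the bottom of the coclosed spectrum. Listing eigenvalues in nondecreasing order gives $\lambda_j^{\prime(p)}=(p+1)^2$ for $1\leq j\leq ND_p$. This is \eqref{eq:boundary-spectrum}.

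\textbf{Main obstacle.} The homotopy arguments are routine, so the point that needs care is the statement about the bottom of the coclosed spectrum. We must confirm that \eqref{eq:ball-spectrum} really identifies the first $D_p$ eigenvalues with no smaller eigenvalue below them, and that there are no zero modes from harmonic forms. The latter holds because $H^p(\mathbb{S}^{2p+1})=0$ for $1\leq p\leq 2p$. Having the explicit coexact spectrum of the sphere settles both points at once.
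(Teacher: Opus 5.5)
Your proposal is correct and follows the paper's proof: you reduce $M_\epsilon$ to the core, read off $H^p=H^{p+1}=0$, and quote the Ikeda--Taniguchi coclosed spectrum of $\mathbb{S}^{2p+1}$ (bottom value $(p+1)^2$ with multiplicity $D_p$, no harmonic $p$-forms), just as the paper does; the only cosmetic difference is that you get the core's cohomology from a wedge of $N-1$ copies of $\mathbb{S}^{n-1}$, whereas the paper uses Alexander duality for $\mathbb{S}^n\setminus\{x_1,\ldots,x_N\}$. One slip to correct: in your indexing the coexact $p$-form eigenvalues on $\mathbb{S}^m$ are $(k+p+1)(k+m-p)$, $k\geq0$, not $(k+p+1)(k+m-p-1)$, since the latter gives $(k+p+1)(k+p)$ at $m=2p+1$ (e.g.\ $2$ rather than $4$ for $1$-forms on $\mathbb{S}^3$), although the value $(k+p+1)^2$ you actually use is the right one.
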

\begin{proof}
Attaching annuli only extends the boundary collars, so $M_\epsilon$ is
diffeomorphic to $C$. Removing one center point from each excised ball
and radially retracting that punctured ball onto its boundary gives a
deformation retraction of $\mathbb{S}^n\setminus\{x_1,\ldots,x_N\}$ onto $C$.
Alexander duality and the universal coefficient theorem over $\R$
give \eqref{eq:cohomology}; see
\cite[Corollary~3.45]{Hatcher2002}.
Since $p\geq1$,
\[
  1\leq p<p+1\leq2p=n-2.
\]
Thus $H^p(M_\epsilon;\R)=H^{p+1}(M_\epsilon;\R)=0$, and the definitions
\eqref{eq:topological-indices} imply $I_p=b_{p+1}=0$.

The positive coclosed $p$-form Hodge eigenvalues on the unit sphere
$\mathbb{S}^{2p+1}$ are $(k+p)^2$, $k\geq1$; see
\cite{IkedaTaniguchi1978} and
\cite[Theorem~8.1, second family]{Karpukhin2019}.
The first eigenspace has dimension $D_p$, and there are no harmonic
$p$-forms in this degree. The spectrum on the disjoint union of $N$
spheres is the direct sum of the component spectra, proving
\eqref{eq:boundary-spectrum}.
\end{proof}

\section{Annular energy estimates and sharpness}\label{sec:energy}

We prove Theorem~\ref{thm:cluster} by two independent energy bounds.
Both depend only on the Euclidean annuli and their outer traces.

\subsection{Conformal reduction and coclosed projection}

On one annulus $A_\epsilon=A_\epsilon^{(j)}$, set $t=-\log r$ and
$T=\log(1/\epsilon)$. Then
\[
  g_A=e^{-2t}\bar{g},\qquad
  \bar{g}=dt^2+g_{\mathbb{S}},\qquad 0\leq t\leq T.
\]
The conformal scaling calculation in Proposition~\ref{prop:conformal}
gives
\begin{equation}\label{eq:annular-conformal}
  \int_{A_\epsilon}|d\omega|_{g_A}^2\,dV_{g_A}
  =\int_{[0,T]\times \mathbb{S}^{2p+1}}|d\omega|_{\bar{g}}^2
                             \,dV_{\bar{g}}.
\end{equation}
Write a smooth $p$-form as $\omega=\alpha(t)+dt\wedge\beta(t)$, where
$\alpha(t)$ and $\beta(t)$ are spherical forms of degrees $p$ and
$p-1$, respectively. Let $d_\mathbb{S}$ denote the spherical exterior derivative.
Then
\[
  d\omega=d_\mathbb{S}\alpha+dt\wedge(\alpha'-d_\mathbb{S}\beta),\qquad
  \alpha'=\partial_t\alpha.
\]
The two terms are orthogonal for the product metric, so
\begin{equation}\label{eq:annular-energy}
  \int_{A_\epsilon}|d\omega|_{g_A}^2\,dV_{g_A}
  =\int_0^T\bigl(\|d_\mathbb{S}\alpha(t)\|_{L^2(\mathbb{S}^{n-1},g_{\mathbb{S}})}^2
                +\|\alpha'(t)-d_\mathbb{S}\beta(t)\|_{L^2(\mathbb{S}^{n-1},g_{\mathbb{S}})}^2\bigr)\,dt.
\end{equation}

Let $P$ be the coclosed Hodge projection on $(\mathbb{S}^{2p+1},h)$ and put
$u(t)=P\alpha(t)$. Since $P$ is independent of $t$,
\eqref{eq:projection-properties} gives
\[
  d_\mathbb{S}\alpha=d_\mathbb{S}u,\qquad P(\alpha'-d_\mathbb{S}\beta)=u'.
\]
Using the first coclosed spherical eigenvalue $(p+1)^2$ and the contraction
property of $P$ yields
\begin{equation}\label{eq:projected-energy}
  \int_{A_\epsilon}|d\omega|_{g_A}^2\,dV_{g_A}
  \geq\int_0^T\bigl(\|u'\|_{L^2(\mathbb{S}^{n-1},g_{\mathbb{S}})}^2+(p+1)^2\|u\|_{L^2(\mathbb{S}^{n-1},g_{\mathbb{S}})}^2\bigr)\,dt.
\end{equation}
If the outer trace of $\omega$ is coclosed and equals $\varphi$, then
$u(0)=\varphi$. 

\subsection{The free-end lower bound}

\begin{lemma}\label{lem:free-end}
Let $\mathsf H$ be a Hilbert space, $T>0$, and
$u\in H^1((0,T);\mathsf H)$. Then
\begin{equation}\label{eq:square-completion}
  \int_0^T\bigl(\|u'\|^2+(p+1)^2\|u\|^2\bigr)\,dt
  =(p+1)\tanh((p+1)T)\|u(0)\|^2+\int_0^T\|u'+bu\|^2\,dt,
\end{equation}
which directly yields
\begin{equation}\label{eq:free-end}
  \int_0^T\bigl(\|u'\|^2+(p+1)^2\|u\|^2\bigr)\,dt
  \geq (p+1)\tanh((p+1)T)\|u(0)\|^2.
\end{equation}
Equality is attained by
$u(t)=\cosh((p+1)(T-t))\varphi/\cosh((p+1)T)$ for prescribed $u(0)=\varphi$.
\end{lemma}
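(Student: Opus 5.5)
The plan is a Riccati-type completion of the square. The function $b$ in \eqref{eq:square-completion} is not defined in the statement; I will take it to be the scalar
\[
  b(t)=(p+1)\tanh\bigl((p+1)(T-t)\bigr),
\]
which solves the Riccati equation $b'=b^2-(p+1)^2$ with terminal value $b(T)=0$. Throughout write $k=p+1$.

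First I argue for smooth $u\in C^1([0,T];\mathsf H)$, where the $\mathsf H$-valued calculus is harmless. Expanding the square gives
\[
  \|u'+bu\|^2=\|u'\|^2+2b\,\mathrm{Re}\langle u',u\rangle+b^2\|u\|^2 .
\]
Since $\tfrac{d}{dt}\|u\|^2=2\,\mathrm{Re}\langle u',u\rangle$, integration by parts yields
\[
  \int_0^T 2b\,\mathrm{Re}\langle u',u\rangle\,dt
  =b(T)\|u(T)\|^2-b(0)\|u(0)\|^2-\int_0^T b'\|u\|^2\,dt .
\]
Insert this and use $b^2-b'=k^2$ and $b(T)=0$. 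The result is
\[
  \int_0^T\|u'+bu\|^2\,dt
  =\int_0^T\bigl(\|u'\|^2+k^2\|u\|^2\bigr)\,dt-b(0)\|u(0)\|^2 .
\]
Here $b(0)=k\tanh(kT)$, so this is exactly \eqref{eq:square-completion}.

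Next I extend the identity to $u\in H^1((0,T);\mathsf H)$ by density of smooth functions. Both sides are continuous in the $H^1$ norm: $b$ is bounded and smooth on $[0,T]$, and the point evaluation $u\mapsto u(0)$ is continuous from $H^1((0,T);\mathsf H)$ to $\mathsf H$ by the one-dimensional Sobolev embedding. The inequality \eqref{eq:free-end} then follows at once, because the last integral is nonnegative.

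For the equality case, equality in \eqref{eq:free-end} holds if and only if $u'+bu=0$ almost everywhere. Take $u(t)=f(t)\varphi$ with $f(t)=\cosh(k(T-t))/\cosh(kT)$. Then $f(0)=1$ and
\[
  f'=-k\sinh(k(T-t))/\cosh(kT)=-bf,
\]
so the remainder term vanishes. By uniqueness for this linear ODE with $u(0)=\varphi$, this is the unique minimizer.

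The only delicate point is identifying $b$ and verifying the Riccati relation $b'=b^2-k^2$ together with $b(T)=0$. The terminal condition is what kills the boundary term at $t=T$; this is why the end $t=T$ is called the free end. The rest is routine.
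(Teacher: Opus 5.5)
Your proof is correct and follows the paper's argument: the same Riccati function $b(t)=(p+1)\tanh((p+1)(T-t))$ with $b'=b^2-(p+1)^2$, $b(T)=0$, integration by parts, and the check that the $\cosh$ profile satisfies $u'+bu=0$. Your density extension from $C^1$ to $H^1((0,T);\mathsf H)$ and the uniqueness remark are small additions that the paper leaves implicit.
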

\begin{proof}
Set $b(t)=(p+1)\tanh((p+1)(T-t))$. Then $b'=b^2-(p+1)^2$, $b(T)=0$, and
$b(0)=(p+1)\tanh((p+1)T)$. Integration by parts gives \eqref{eq:square-completion}.

The displayed equality function solves
$u'+bu=0$ and has the required initial value. Its derivative vanishes
at $T$, which is the natural free-end condition.
\end{proof}

\begin{proposition}\label{prop:lower}
For every smooth coclosed boundary form $\varphi$ on $\partial M_\epsilon$,
\begin{equation}\label{eq:global-lower}
  q_{g_\epsilon}(\varphi)
  \geq (p+1)\tanh((p+1)T)\|\varphi\|_{L^2(\partial M_\epsilon)}^2.
\end{equation}
Consequently, $\sigma_1^{(p)}(M_\epsilon,g_\epsilon)\geq (p+1)\tanh((p+1)T)$.
\end{proposition}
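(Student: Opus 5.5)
The plan is to bound the energy of the canonical extension from below by its energy on the annuli alone, and then to apply the annular estimates already established. Let $\varphi$ be smooth and coclosed on $\partial M_\epsilon=\coprod_j \mathbb{S}^{2p+1}$, and write $\varphi_j$ for its restriction to the $j$-th outer sphere. The restriction $\varphi_j$ is coclosed on that component, since $\delta_\Sigma$ acts componentwise. Let $\omega=\lambda_{g_\epsilon}(\varphi)$ be the canonical extension. By \eqref{eq:energy-minimum}, $q_{g_\epsilon}(\varphi)=\int_{M_\epsilon}|d\omega|^2\,dV_{g_\epsilon}$. Because the integrand is nonnegative and the annuli $A_\epsilon^{(j)}$ are disjoint subsets of $M_\epsilon$,
\[
  q_{g_\epsilon}(\varphi)\geq\sum_{j=1}^N\int_{A_\epsilon^{(j)}}|d\omega|_{g_A}^2\,dV_{g_A}.
\]
The core contributes a nonnegative amount that we discard. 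This step needs no information about the interior geometry, which is why no uniform control of $g_C$ is required.

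On each annulus, $\omega$ is smooth up to the boundary, so it defines a smooth form on $[0,T]\times\mathbb{S}^{2p+1}$ in the variable $t=-\log r$. The outer sphere $r=1$ corresponds to $t=0$, and the outer trace of $\omega$ there is $\varphi_j$. Apply \eqref{eq:annular-conformal}, \eqref{eq:annular-energy} and \eqref{eq:projected-energy} with $u(t)=P\alpha(t)$. This gives
\[
  \int_{A_\epsilon^{(j)}}|d\omega|^2\geq\int_0^T\bigl(\|u'\|^2+(p+1)^2\|u\|^2\bigr)\,dt .
\]
Since $\varphi_j$ is coclosed, $u(0)=P\varphi_j=\varphi_j$. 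The function $u$ lies in $H^1((0,T);L^2\Omega^p(\mathbb{S}^{2p+1}))$, because it is smooth in $t$ and $P$ is bounded on $L^2$. Lemma~\ref{lem:free-end} then yields
\[
  \int_{A_\epsilon^{(j)}}|d\omega|^2\geq (p+1)\tanh((p+1)T)\,\|\varphi_j\|^2_{L^2(\mathbb{S}^{2p+1})}.
\]
No condition at the inner end $t=T$ is needed, because the estimate is of free-end type. Summing over $j$ and using $\|\varphi\|^2_{L^2(\partial M_\epsilon)}=\sum_j\|\varphi_j\|^2$ gives \eqref{eq:global-lower}.

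For the eigenvalue statement, take $k=1$ in \eqref{eq:variational}. The only admissible space is $F=\{0\}$, so $\sigma_1^{(p)}$ is the infimum of the Rayleigh quotient over smooth nonzero coclosed $\varphi$. By \eqref{eq:global-lower}, every such quotient is at least $(p+1)\tanh((p+1)T)$.

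The main point to verify is the identity $P(\alpha'-d_\mathbb{S}\beta)=u'$ and the claim that projecting can only decrease the energy. The relevant facts are that $P$ commutes with $\partial_t$, annihilates exact forms by \eqref{eq:projection-properties}, and is an $L^2$ contraction. Together with $\|d_\mathbb{S}u\|^2\geq(p+1)^2\|u\|^2$ on coclosed forms, these give \eqref{eq:projected-energy}. That inequality in turn uses two facts about the unit sphere $\mathbb{S}^{2p+1}$: it has no harmonic $p$-forms for $p\geq1$, and its first coclosed eigenvalue is $(p+1)^2$, as recorded in Lemma~\ref{lem:indices-spectrum}. Both were established before the statement, so the proof reduces to assembling them as above.
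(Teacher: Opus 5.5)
Your proposal is correct and follows the paper's proof. Like the paper, you discard the nonnegative core energy, apply the conformal reduction and coclosed projection \eqref{eq:projected-energy} on each annulus, invoke the free-end bound of Lemma~\ref{lem:free-end}, sum over the boundary components, and take $k=1$ in \eqref{eq:variational}. The one small difference is that you work directly with the canonical extension $\lambda_{g_\epsilon}(\varphi)$, whereas the paper bounds the energy of an arbitrary smooth extension and then minimizes via \eqref{eq:energy-minimum}; the two routes are equivalent.
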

\begin{proof}
Let $\eta$ be any smooth extension of $\varphi$.
Apply \eqref{eq:projected-energy} and Lemma~\ref{lem:free-end} on each
annulus. Summing and discarding only the nonnegative core energy gives
\[
  \int_{M_\epsilon}|d\eta|_{g_\epsilon}^2\,dV
  \geq (p+1)\tanh((p+1)T)\sum_{j=1}^N\|\varphi_j\|_{L^2(\mathbb{S}^n,g_{\mathbb{S}})}^2.
\]
Take the minimum over all extensions as in \eqref{eq:energy-minimum},
and then use \eqref{eq:variational} with $k=1$.
\end{proof}

\subsection{The zero-end upper bound}

The lower bound together with \eqref{eq:HPS} suffices for sharpness.
The independent upper estimate below determines the limit of the entire
first $ND_p$ cluster. Let $E_j$ be the first coclosed $p$-form Hodge
eigenspace on the $j$th boundary sphere, and put
\[
  E=\bigoplus_{j=1}^NE_j,\qquad \dim E=ND_p.
\]
Every $\varphi=(\varphi_1,\ldots,\varphi_N)\in E$ satisfies
$\|d_\mathbb{S}\varphi_j\|_h^2=(p+1)^2\|\varphi_j\|_h^2$.

\begin{proposition}\label{prop:upper}
For every $0<\epsilon<1$,
\begin{equation}\label{eq:global-upper}
  \sigma_{ND_p}^{(p)}(M_\epsilon,g_\epsilon)\leq (p+1)\coth((p+1)T).
\end{equation}
\end{proposition}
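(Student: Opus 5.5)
We will use the max--min characterization \eqref{eq:variational} with the $ND_p$-dimensional trial space $E$. Every $\varphi\in E$ is smooth and coclosed. For any subspace $F\subset\cC^p(\partial M_\epsilon)$ of dimension $ND_p-1$, some nonzero $\varphi\in E$ is orthogonal to $F$. It therefore suffices to prove
\[
  q_{g_\epsilon}(\varphi)\leq (p+1)\coth((p+1)T)\|\varphi\|_{L^2(\partial M_\epsilon)}^2,\qquad \varphi\in E.
\]
By \eqref{eq:energy-minimum}, this reduces to producing, for each such $\varphi$, one smooth extension $\eta$ with $i^*\eta=\varphi$ whose energy $\int_{M_\epsilon}|d\eta|^2\,dV$ is at most the right-hand side.

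The extension is built annulus by annulus in the cylinder variable $t=-\log r$, with zero core contribution. On the $j$th annulus set $\eta=f(t)\varphi_j$, a pure tangential form with $\beta=0$. Here $f$ is smooth on $[0,T]$, $f(0)=1$, and $f$ vanishes identically near $t=T$. Extend $\eta$ by zero across the interface $r=\epsilon$ into the core. The form is then smooth on $M_\epsilon$, and its core energy is zero. By \eqref{eq:annular-conformal} and \eqref{eq:annular-energy}, together with $\|d_{\mathbb S}\varphi_j\|^2=(p+1)^2\|\varphi_j\|^2$, the energy on the $j$th annulus equals
\[
  \|\varphi_j\|^2\int_0^T\bigl(f'(t)^2+(p+1)^2f(t)^2\bigr)\,dt .
\]
The one-dimensional minimizer with $f(0)=1$ and $f(T)=0$ is $f_0(t)=\sinh((p+1)(T-t))/\sinh((p+1)T)$. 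By integration by parts its energy is $-f_0f_0'\big|_0^T=(p+1)\coth((p+1)T)$. This is the zero-end counterpart of Lemma~\ref{lem:free-end}, and the same square-completion with $b=(p+1)\coth((p+1)(T-t))$ works.

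The one technical point, which is the main obstacle, is that $f_0$ is only continuous with a kink at $t=T$ once extended by zero. Two remedies are available. The first is to approximate $f_0$ in $H^1(0,T)$ by smooth $f_k$ with $f_k(0)=1$ that vanish near $T$, for instance $f_k(t)=f_0(t+\delta_k)/f_0(\delta_k)$ suitably cut off. Their energies then converge to $(p+1)\coth((p+1)T)$, which gives $q\leq(p+1)\coth((p+1)T)\|\varphi\|^2+o(1)$ and hence the bound in the limit. The second is to note that the minimization \eqref{eq:energy-minimum} extends to $H^1$ trial forms by density, so $\eta=f_0\varphi_j$ itself is admissible. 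Either way, summing over $j$ gives the displayed estimate, and \eqref{eq:variational} with $k=ND_p$ yields \eqref{eq:global-upper}.
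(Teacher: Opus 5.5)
Your proposal is correct and follows the paper's proof essentially step by step: the trial space $E$ in the max--min principle, the ansatz $f(t)\varphi_j$ on each annulus with zero core contribution, the profile $\sinh((p+1)(T-t))/\sinh((p+1)T)$ with energy $(p+1)\coth((p+1)T)$, and smoothing near $t=T$ followed by a limit. The paper makes your first remedy explicit with the cutoff $f_{T,\tau}(t)=\chi((T-t)/\tau)f_T(t)$ and $H^1$ error bounds $O_T(\tau^3)$ and $O_T(\tau)$; your square-completion remark about minimality is correct but not needed for an upper bound.
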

\begin{proof}
The scalar minimizer with endpoint values $f(0)=1$ and $f(T)=0$ is
\begin{equation}\label{eq:sinh-profile}
  f_T(t)=\frac{\sinh((p+1)(T-t))}{\sinh((p+1)T)}.
\end{equation}
It satisfies $f_T''=(p+1)^2f_T$, and hence
\begin{equation}\label{eq:profile-energy}
  \int_0^T\bigl((f_T')^2+(p+1)^2f_T^2\bigr)\,dt
  =[f_Tf_T']_0^T=-f_T'(0)=(p+1)\coth((p+1)T).
\end{equation}
The form $f_T(t)\varphi_j$ vanishes at the inner seam, but its normal
derivatives need not vanish there. To obtain globally smooth trial forms,
choose $\chi\in C^\infty(\R;[0,1])$ with
$\chi=0$ on $(-\infty,1/3]$ and $\chi=1$ on $[2/3,\infty)$, and set
\[
  f_{T,\tau}(t)=\chi\bigl((T-t)/\tau\bigr)f_T(t),\qquad 0<\tau<T/2.
\]
For fixed $T$, near $T$ one has $|f_T(t)|\leq C_T(T-t)$ and
$|f_T'(t)|\leq C_T$ for some constant $C_T$. The support of $f_{T,\tau}-f_T$ has length at most
$2\tau/3$. Differentiating the cutoff therefore gives
\[
  \|f_{T,\tau}-f_T\|_{L^2(0,T)}^2=O_T(\tau^3),\qquad
  \|f_{T,\tau}'-f_T'\|_{L^2(0,T)}^2=O_T(\tau).
\]
Thus $f_{T,\tau}\to f_T$ in $H^1(0,T)$.

For $\varphi\in E$, define $\eta_\tau=f_{T,\tau}(t)\varphi_j$ on the
$j$th annulus and $\eta_\tau=0$ on the core.
This is smooth across the interfaces, since it vanishes in their
neighborhoods, and its outer trace is $\varphi$. By \eqref{eq:annular-energy},
\begin{equation}\label{eq:trial-energy}
\begin{split}
  \int_{M_\epsilon}|d\eta_\tau|_{g_{\epsilon}}^2\,dV
    &=J_{T,\tau}\|\varphi\|_{L^2(\partial M_\epsilon, g_{\epsilon}|_{\partial M_\epsilon})}^2,\\
  J_{T,\tau}&=\int_0^T\bigl((f_{T,\tau}')^2+(p+1)^2f_{T,\tau}^2\bigr)\,dt.
\end{split}
\end{equation}
The factorization is exact for every $\varphi\in E$, so the estimate is
uniform on its unit sphere. Given any $(ND_p-1)$-dimensional subspace
$F\subset\cC^p(\partial M_\epsilon)$, choose
$0\ne\varphi\in E\cap F^\perp$. The energy minimum
\eqref{eq:energy-minimum} gives
$q_{g_\epsilon}(\varphi)/\|\varphi\|_{L^2(\partial M_\epsilon, g_\epsilon|_{\partial M_\epsilon})}^2\leq J_{T,\tau}$.
The max--min principle \eqref{eq:variational} therefore implies
\[
  \sigma_{ND_p}^{(p)}(M_\epsilon,g_\epsilon)\leq J_{T,\tau}.
\]
Let $\tau\downarrow0$ with $\epsilon$, and therefore $T$, fixed.
Equation~\eqref{eq:profile-energy} gives \eqref{eq:global-upper}.
\end{proof}

\begin{proof}[Proof of Theorem~\ref{thm:cluster}]
The geometric and topological assertions follow from
Lemmas~\ref{lem:geometry} and~\ref{lem:indices-spectrum}.
By Propositions~\ref{prop:lower} and~\ref{prop:upper} and eigenvalue ordering,
\[
  (p+1)\tanh((p+1)T)\leq\sigma_1^{(p)}
  \leq\sigma_j^{(p)}\leq\sigma_{ND_p}^{(p)}\leq (p+1)\coth((p+1)T)
  \qquad(1\leq j\leq ND_p).
\]
As $\epsilon\downarrow0$, $T\to\infty$ and both endpoints tend to $p+1$.
This proves \eqref{eq:cluster-bounds} and the cluster convergence.
The argument has not used \eqref{eq:HPS}.
\end{proof}

\subsection{Sharpness and the spectral indices}

\begin{proof}[Proof of Corollary~\ref{cor:sharpness} for $p\geq1$]
Choose $N$ as in \eqref{eq:Dp}. Then
\[
  1\leq m\leq2m-1\leq ND_p.
\]
By Lemma~\ref{lem:indices-spectrum}, $I_p=b_{p+1}=0$ and
$\lambda_{2m-1}^{\prime(p)}(\partial M_\epsilon)=(p+1)^2$.
By Theorem~\ref{thm:cluster},
$\sigma_m^{(p)}(M_\epsilon,g_\epsilon)\to p+1$.
This proves asymptotic equality with a fixed positive right-hand side.

For the stated error bound, apply Karpukhin's inequality \eqref{eq:HPS}
to obtain $\sigma_m^{(p)}\leq p+1$.
Combining this with the independent lower bound gives
\[
\begin{split}
  0\leq (p+1)^2-\bigl(\sigma_m^{(p)}\bigr)^2
  &\leq (p+1)^2\bigl(1-\tanh^2((p+1)T)\bigr)\\
  &=(p+1)^2\operatorname{sech}^2((p+1)T)
    =\frac{4(p+1)^2\epsilon^{2(p+1)}}{(1+\epsilon^{2(p+1)})^2}.
\end{split}
\]
\end{proof}

\subsection{The established planar case}

For completeness, we explain how the known scalar sharpness theorem fits
the indexing and smoothness conventions used here. This part is due to
Girouard and Polterovich
\cite[Theorem~1.3.1]{GirouardPolterovich2010}.

Write $s_m(\Omega)$ for the $m$th positive scalar Steklov eigenvalue of
a connected planar domain, and $L(\partial\Omega)$ for its perimeter.
For each $m\geq1$, Girouard and Polterovich construct simply connected
bounded Lipschitz domains $\Omega_\ell$ such that
\begin{equation}\label{eq:planar-product}
  s_m(\Omega_\ell)L(\partial\Omega_\ell)\longrightarrow2\pi m.
\end{equation}
For $m=1$, the disk gives equality directly. For $m>1$, the construction
degenerates toward $m$ equal disks.

For each fixed $\ell$, smooth the finitely many boundary corners from
the interior in equi-Lipschitz charts.
The resulting simply connected smooth domains
$\Omega_{\ell,q}\subset\Omega_\ell$ may be chosen so that their indicators
converge in $L^1$, their perimeters converge, and their
$BV$-to-boundary-$L^1$ trace norms are uniformly bounded in $q$.
These fixed-domain approximations and Steklov convergence are justified by
\cite[\S3.3.2 and Proposition~3.5]{KarpukhinLagace2024}.
The constants need not be uniform in $\ell$.
A diagonal choice preserves \eqref{eq:planar-product} for smooth domains.

Rescale each chosen domain so that its boundary length is $2\pi m$.
The product in \eqref{eq:planar-product} is scale invariant, so $s_m\to1$.
A smooth simply connected planar domain has $I_0=1$ and $b_1=0$, while
its boundary is intrinsically a circle of length $2\pi m$.
With zeros included in both spectral sequences,
\[
  \sigma_{m+1}^{(0)}=s_m\longrightarrow1,\qquad
  \lambda_{I_0+b_1+2m-1}^{\prime(0)}
  =\lambda_{2m}^{\prime(0)}
  =\left(\frac{2\pi m}{L(\partial\Omega)}\right)^2=1.
\]
This proves the $p=0$ assertion in Corollary~\ref{cor:sharpness} and
completes the all-degree sharpness conclusion.

\section*{ACKNOWLEDGEMENTS}
The authors acknowledge the use of AI to assist with English-language editing,
manuscript polishing, and preliminary exploration of mathematical proofs. The authors
take full responsibility for all mathematical content, arguments, and the final version of
the manuscript.

\end{document}